\newcommand{\bbQ}{{\mathbb{Q}}}
\newcommand{\bbK}{{\mathbb{K}}}
\newcommand{\bbR}{{\mathbb{R}}}
\newcommand{\bbL}{{\mathbb{L}}}
\newcommand{\bbZ}{{\mathbb{Z}}}
\newcommand{\bbF}{{\mathbb{F}}}
\newcommand{\bbC}{{\mathbb{C}}}
\newcommand{\bbN}{{\mathbb{N}}}
\newcommand{\calR}{{\mathcal{R}}}
\newcommand{\calO}{{\mathcal{O}}}
\newcommand{\calD}{{\mathcal{D}}}
\newcommand{\calH}{{\mathcal{H}}}
\newcommand{\calN}{{\mathcal{N}}}
\newcommand{\frake}{{\mathfrak{e}}}
\newcommand{\frakf}{{\mathfrak{f}}}
\newcommand{\frakG}{{\mathfrak{G}}}
\newcommand{\zetax}{\zeta_{2(p-1)}}
\newcommand{\bbone}{\mathds{1}}
\newcommand{\DBerk}{\calD_{\mathrm{Ber},K}^1}
\DeclarePairedDelimiter\norm{\lVert}{\rVert}%
\Crefname{conjecture}{Conjecture}{Conjectures}
\Crefname{lemma}{Lemma}{Lemmas}
\Crefname{definition}{Definition}{Definitions}
\Crefname{remark}{Remark}{Remarks}
\Crefname{proposition}{Proposition}{Propositions}
\Crefname{corollary}{Corollary}{Corollarys}
\Crefname{equation}{}{}
\Crefname{item}{}{}
\newtheorem{theorem}{Theorem}[section]
\newtheorem*{theorem*}{Theorem}
\newtheorem{lemma}[theorem]{Lemma}
\newtheorem{remark}[theorem]{Remark}
\newtheorem{proposition}[theorem]{Proposition}
\newtheorem*{proposition*}{Proposition}
\newtheorem{definition}[theorem]{Definition}
\newtheorem{conjecture}[theorem]{Conjecture}
\newtheorem{corollary}[theorem]{Corollary}
\newtheorem{example}[theorem]{Example}
\numberwithin{equation}{section}
\title{Uniformizer of the false Tate curve extension of $\mathbb{Q}_p$ (II)}
\author{Shanwen Wang\orcidlink{0000-0003-0228-1208}}
\address{School of Mathematics, Renmin University of China, No. 59 Zhongguancun Street, Haidian District, Beijing, 100872, China}
\email{s\_wang@math.ruc.edu.cn}
\thanks{Shanwen Wang is supported by the Fundamental Research Funds for the Central Universities, the Research Funds of Renmin University of China No.2020030251 and The National Natural Science Foundation of China (Grant No.11971035).}
\author{Yijun Yuan\orcidlink{0000-0001-6571-6980}}
\address{Yau Mathematical Sciences Center, Tsinghua University, Beijing, 100084, China}
\email{941201yuan@gmail.com}
\keywords{$p$-adic Mal'cev-Neumann field; explicit uniformizer; false Tate curve extension of $\bbQ_p$; MacLane's valuation.}
\begin{document}
\begin{abstract}
    In this article, we investigate the explicit formulae for the uniformizers of the false Tate curve extension of ${\bbQ}_p$. More precisely, we establish the formulae for the fields ${\bbK}_p^{m,1}={\bbQ}_p(\zeta_{p^m}, p^{1/p})$ with $m\geq 1$ and for general $n\geq 2$, we prove the existence of the recurrence polynomials ${\calR}_p^{m,n}$ for general field extensions ${\bbK}_p^{m, n}$ of ${\bbQ}_p$, which shows the possibility to construct the uniformizers systematically.
\end{abstract}
\subjclass[2010]{11S05, 11Y40, 11P83, 05A10, 41A58}
\maketitle

\section{Introduction}
Fix a prime $p\geq 3$.
In this article, we continue to explore the explicit uniformizer of the false Tate curve extension $\bbK_p^{m,n}=\bbQ_p\left(\zeta_{p^m}, p^{1/p^n}\right)$ of $\bbQ_p$, for $n,m\in\bbN$. One of the interests of the construction of the explicit uniformizer of the false Tate curve extension is to study the field of norms (cf. \cite{FonWin1979a,FonWin1979b}) of the $p$-adic Lie extension $\bbQ_p^{\mathrm{FT}}=\cup_{m,n\geq 1}\bbK_p^{m,n}$ of $\bbQ_p$, which is still a mysterious (cf. \cite{BergerMOF}) and should play a role in the non-commutative Iwasawa theory for the tower of the false Tate curve extensions.

The objective of this article is in two folds:
\begin{enumerate}
    \item Construct the explicit uniformizers of the tower $\bbK_p^{m,1}$ for $m\geq 1$, which is ``orthogonal'' to our previous work (cf. \cite{WangYuan2021}) on $\bbK_p^{2,n}$ for $n\geq 1$.
    \item Investigate the formulae of the uniformizers of $\bbK_p^{m,n}$ for general $m,n$.
\end{enumerate}
\subsection{Uniformizers of $\bbK_p^{m,1}$}
We construct the uniformizers of $\bbK_p^{m,1}$ for $m\geq 2$ using the induction process as follows:
For $m=2$, we take the uniformizer
\[{\pi_p^{2,1}}=p^{-1/p}\left(\zeta_{p^2}-\sum_{k=0}^{p-1}\frac{1}{[k!]}\zetax^k p^{\frac{k}{p(p-1)}}\right)\]
of $\bbK_p^{2,1}$ constructed in \cite[Theorem 3.23]{WangYuan2021}, where $[\cdot]: \bar{\bbF}_p\rightarrow W(\bar{\bbF}_p)$ is the Teichm\"{u}ller character. Set two polynomials in $\bbZ_{(p)}[T]$
$$\calR_p^{3,1}(T)=\sum_{k=0}^{p-1}\frac{(-1)^k}{k!}T^k+\sum_{k=1}^{p-1}\frac{(-1)^k(k-H_k)}{k!}T^{p+k}$$
and
$$\calR_p^{\geq 4,1}(T)=\sum_{k=0}^{p-1}\frac{(-1)^k}{k!}T^k+\sum_{k=1}^{p-1}\frac{(-1)^k(2k-H_k)}{k!}T^{p+k},$$
where $H_k$ is the $k$-th harmonic number $\sum_{i=1}^k\frac{1}{i}$.
For $m=3$, let
\begin{equation}\label{eq:5235}
    \pi_p^{3,1}= \left(\zeta_{p^3}-1\right)^{-2p+2}\cdot\left(\zeta_{p^3}-\calR_p^{3,1}\left(\pi_p^{2,1}\right)\right)\in\bbK_p^{3,1}.
\end{equation}
For $m\geq 4$, we recursively set
\begin{equation}\label{eq:61052}
    \pi_p^{m,1}=
    \left(\zeta_{p^m}-1\right)^{-2p+2}\cdot
    \left(\zeta_{p^m}-\calR_p^{\geq 4,1}\left(\pi_p^{m-1,1}\right)\right)\in\bbK_p^{m,1}.
\end{equation}
Then, our first result is the following theorem, whose proof relies on the truncated expansion of $\zeta_{p^m}$ in the $p$-adic Mal'cev-Neumann field studied in \cite{WangYuan2022}
:
\begin{theorem}\label{thm:bestuniformizer}
    For an integer $m\geq 3$, $\pi_p^{m,1}$ is a uniformizer of $\bbK_p^{m,1}$.
\end{theorem}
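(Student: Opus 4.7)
The plan is to proceed by induction on $m \geq 3$, treating $m = 3$ as the base case (since the polynomial $\calR_p^{3,1}$ differs from $\calR_p^{\geq 4,1}$) and $m \geq 4$ via the recursion in \eqref{eq:61052}. Normalize $v$ so that $v(p) = 1$; since $\bbK_p^{m,1}/\bbQ_p$ is totally ramified of degree $p^m(p-1)$, the goal is to show $v(\pi_p^{m,1}) = 1/(p^m(p-1))$. Using $v(\zeta_{p^m} - 1) = 1/(p^{m-1}(p-1))$, the prefactor $(\zeta_{p^m}-1)^{-2p+2}$ contributes valuation $-2/p^{m-1}$, so the whole task reduces to proving
\[
v\bigl(\zeta_{p^m} - \calR_p^{?,1}(\pi_p^{m-1,1})\bigr) \;=\; \frac{2p^2 - 2p + 1}{p^m(p-1)},
\]
where $\calR_p^{?,1}$ stands for $\calR_p^{3,1}$ when $m = 3$ and for $\calR_p^{\geq 4,1}$ when $m \geq 4$.

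The main tool is the $p$-adic Mal'cev-Neumann field $\bbL$ of \cite{WangYuan2022}: I would embed everything in $\bbL$ and use the truncated generalized power series expansion of $\zeta_{p^m}$ available there. The first block $\sum_{k=0}^{p-1} (-1)^k T^k / k!$ of $\calR$ is the degree-$(p-1)$ truncation of $e^{-T}$; substituting $\pi_p^{m-1,1}$ and invoking its Mal'cev-Neumann expansion, this block is engineered to cancel the leading layer of terms in the expansion of $\zeta_{p^m}$ via the exponential-type identity induced by $\zeta_{p^m}^p = \zeta_{p^{m-1}}$. The correction block $\sum_{k=1}^{p-1}(-1)^k(c_k - H_k) T^{p+k}/k!$, with $c_k = k$ for $m = 3$ and $c_k = 2k$ for $m \geq 4$, kills the next layer; the discrepancy between the two values of $c_k$ reflects a one-step shift in the recursive substitution when we pass from $\pi_p^{2,1}$ (given directly by a Mal'cev-Neumann truncation in \cite{WangYuan2021}) to $\pi_p^{m-1,1}$ for $m \geq 4$ (obtained via an inner $\calR$-substitution), which introduces an extra harmonic-number contribution.

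For the base case $m = 3$, I would substitute the explicit formula for $\pi_p^{2,1}$ from \cite[Theorem 3.23]{WangYuan2021} into $\calR_p^{3,1}$, expand in $\bbL$, and match against the expansion of $\zeta_{p^3}$ up to the target order. For the inductive step, I would strengthen the induction hypothesis from ``$\pi_p^{m-1,1}$ is a uniformizer'' to a more precise statement about the first few terms of the Mal'cev-Neumann expansion of $\pi_p^{m-1,1}$; then substituting into $\calR_p^{\geq 4,1}$ and comparing with the expansion of $\zeta_{p^m}$ would simultaneously prove the valuation identity above for $\pi_p^{m,1}$ and the strengthened hypothesis at level $m$, so that the induction carries.

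The main obstacle will be the combinatorial bookkeeping of Mal'cev-Neumann terms: verifying that the coefficients $(-1)^k(c_k - H_k)/k!$ are precisely those that zero out every surviving term of exponent strictly below $(2p^2-2p+1)/(p^m(p-1))$ after the $e^{-T}$-truncation block has done its work. I expect this to reduce to a harmonic-number identity modulo $p$, finer than but analogous to the identities used in \cite{WangYuan2021}. Calibrating the induction to carry exactly the right amount of expansion data, without forcing an ever-growing precision at each step, will be the subtlest part of the argument.
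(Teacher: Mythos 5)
Your outline matches the paper's proof: it too works in the $p$-adic Mal'cev--Neumann field, reduces via $v_p\left((\zeta_{p^m}-1)^{-2p+2}\right)=-2/p^{m-1}$ to showing $v_p\left(\zeta_{p^m}-\calR(\pi_p^{m-1,1})\right)=\tfrac{2p^2-2p+1}{p^m(p-1)}$, and runs the induction with exactly the strengthened hypothesis you propose, namely the fixed-precision two-term expansion $\pi_p^{m-1,1}=(-1)^{m-1}\zetax p^{\frac{1}{p^{m-2}(p-1)}}\sigma_{m-1}+2\zetax^2 p^{\frac{2}{p^{m-2}(p-1)}}\sigma_{m-1}+O\bigl(p^{\frac{2}{p^{m-2}(p-1)}}\bigr)$, so no growing precision is needed. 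The only detail to adjust in your heuristics: the passage from $k$ to $2k$ in the correction block is caused by the second coefficient of that expansion being $1$ for $\pi_p^{2,1}$ but $2$ for all later uniformizers (the parameter $\beta$ in the paper's $A_{p,n}^{(\beta)}$), not by an extra harmonic-number contribution.
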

\subsection{Recurrence polynomials for general case}
The appearance of the recurrence polynomials $\calR_p^{3,1}$ and $\calR_p^{\geq 4,1}$ motivates us to investigate the existence of the recurrence polynomials for general $n\geq 2$. Since we have already constructed the uniformizers $\pi_p^{2,n}$ of the field $\bbK_p^{2,n}$ in \cite{WangYuan2021}, the recurrence polynomials (if they exist) will give the formulae of the uniformizers of general $\bbK_p^{m,n}$.
Our second result is exactly the existence of such recurrence polynomials:
\begin{theorem}\label{thm:40345}
    For any integer $n\geq 1$, there exists a series\footnote{The series $\left\{\calR_p^{m,n}(T)\right\}_{m\geq 3}$ is obviously non-unique.} of polynomials $\left\{\calR_p^{m,n}(T)\right\}_{m\geq 3}$ in $\bbZ_{(p)}[T]$ and two series of integers $\{\alpha_p^{m,n}\}_{m\geq 3},\{\beta_p^{m,n}\}_{m\geq 3}$ such that the element
    $$\pi_p^{m,n}\coloneqq \left(\zeta_{p^m}-1\right)^{\alpha_p^{m,n}}\cdot\left(\zeta_{p^m}-\calR_p^{m,n}\left(\pi_p^{m-1,n}\right)\right)^{\beta_p^{m,n}}$$
    is a uniformizer of $\bbK_p^{m,n}$ for all $m\geq 3$.
\end{theorem}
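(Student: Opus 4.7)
The plan is to proceed by induction on $m\geq 2$. The base case $m=2$ is handled by the uniformizer $\pi_p^{2,n}$ of $\bbK_p^{2,n}$ constructed in \cite{WangYuan2021}. For the inductive step, assume $\pi_p^{m-1,n}$ is a uniformizer of $\bbK_p^{m-1,n}$; we seek a polynomial $\calR_p^{m,n}(T)\in\bbZ_{(p)}[T]$ and integers $\alpha_p^{m,n},\beta_p^{m,n}$ making $\pi_p^{m,n}$ a uniformizer of $\bbK_p^{m,n}$. Since $\bbK_p^{m,n}/\bbQ_p$ is totally ramified of degree $p^{m-1}(p-1)p^n$, the extension $\bbK_p^{m,n}/\bbK_p^{m-1,n}$ is totally ramified of degree $p$. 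Normalizing the valuation $v$ of $\bbK_p^{m,n}$ so that a uniformizer has value $1$, one has $v(\pi_p^{m-1,n})=p$ and $v(\zeta_{p^m}-1)=p^n$. It therefore suffices to produce $\calR_p^{m,n}\in\bbZ_{(p)}[T]$ such that $v_{\star}\coloneqq v\bigl(\zeta_{p^m}-\calR_p^{m,n}(\pi_p^{m-1,n})\bigr)$ is coprime to $p$; Bezout's lemma then yields integers $\alpha_p^{m,n},\beta_p^{m,n}$ with $\alpha_p^{m,n}\,p^n+\beta_p^{m,n}\,v_{\star}=1$, forcing $v(\pi_p^{m,n})=1$.

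To construct $\calR_p^{m,n}$, embed $\bbK_p^{m,n}$ into the $p$-adic Mal'cev-Neumann field of \cite{WangYuan2022}, in which $\zeta_{p^m}$ admits a canonical Hahn-type expansion whose leading part is polynomial in $\pi_p^{m-1,n}$. MacLane's theory of augmented valuations on $\bbZ_{(p)}[T]$ then supplies a sequence of key polynomials refining this approximation stage by stage; replacing each Teichm\"{u}ller coefficient (a priori in $\bbZ_p$) by its representative in $\{0,\ldots,p-1\}\subset\bbZ_{(p)}$ introduces only an error at a higher slope, so the key polynomials at every stage can be taken in $\bbZ_{(p)}[T]$. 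We set $\calR_p^{m,n}$ to be the key polynomial at the first stage where the tail slope $v_{\star}$ is coprime to $p$.

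The main technical obstacle is showing that such a stage exists: one must analyze the sequence of slopes of the successive tails $\zeta_{p^m}-\calR_N(\pi_p^{m-1,n})$ as $N$ grows and demonstrate that at least one is coprime to $p$. This follows from the explicit Mal'cev-Neumann expansion of $\zeta_{p^m}$ in \cite{WangYuan2022} combined with the fact that the value group of $\bbK_p^{m,n}$ modulo that of $\bbK_p^{m-1,n}$ is $\bbZ/p\bbZ$, so that the Krasner-type approximations of $\zeta_{p^m}$ by elements of $\bbK_p^{m-1,n}$ cannot take only values in $p\bbZ$. Once the appropriate stage is identified, the Bezout step closes the induction and produces the desired uniformizer $\pi_p^{m,n}$ of $\bbK_p^{m,n}$.
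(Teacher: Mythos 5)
Your skeleton matches the paper's: realize the $p$-adic valuation of $\bbK_p^{m,n}$ as a pseudo-valuation on $\bbK_p^{m-1,n}[T]$, find an approximant $A\in\calO_{\bbK_p^{m-1,n}}$ of $\zeta_{p^m}$ whose distance has valuation prime to $p$, replace $A$ by an integer polynomial in $\pi_p^{m-1,n}$ (the paper's \Cref{lem:23861}), and finish with B\'ezout. However, the step you yourself single out as the main obstacle --- that some approximation $\zeta_{p^m}-A$, $A\in\bbK_p^{m-1,n}$, has valuation prime to $p$ --- is asserted rather than proved, and the justification you give does not carry it. The index $[\Gamma_{\bbK_p^{m,n}}:\Gamma_{\bbK_p^{m-1,n}}]=p$ is a statement about values of elements of the big field; it says nothing a priori about the set of distances $\{v(\zeta_{p^m}-A):A\in\bbK_p^{m-1,n}\}$, which could conceivably lie entirely in $\Gamma_{\bbK_p^{m-1,n}}$ without contradicting that index. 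Likewise, the explicit expansion of $\zeta_{p^m}$ from \cite{WangYuan2022} is only known up to the fixed precision $\tfrac{2}{p^{m-2}(p-1)}$, while the precision required here is of order $d\cdot e_{m,n}$ with $e_{m,n}^{-1}=p^{m+n-1}(p-1)$, hence grows with $n$; this is precisely why the paper abandons explicit expansions for general $n$ and argues structurally.

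What closes the gap in the paper is the analysis of the homogeneous MacLane representation of the Type I pseudo-valuation $v^{m-1}$ with kernel $(G_m)$, $G_m=T^p-\zeta_{p^{m-1}}$: one shows $\phi_1=T-1$, that every key polynomial has degree dividing $\deg G_m=p$, collapses the chain (via \Cref{lem:52781}) to $[v_\frakG,w_1(\phi_s)=\lambda_s,w_2(G_m)=\infty]$ where $\phi_s=T-A_m$ is the last degree-one key polynomial, and then uses that the value group $e_{m,n}\bbZ$ of $v^{m-1}$ is generated by $\Gamma_{\bbK_p^{m-1,n}}=p\,e_{m,n}\bbZ$ together with $\lambda_s$, forcing $\lambda_s=d\cdot e_{m,n}$ with $p\nmid d$. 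If you prefer to avoid this, an elementary argument also works and would repair your proof: the set of values $v_p(\zeta_{p^m}-A)$, $A\in\calO_{\bbK_p^{m-1,n}}$, is discrete and bounded above (Krasner: a value exceeding the distance from $\zeta_{p^m}$ to its conjugates would force $\zeta_{p^m}\in\bbK_p^{m-1,n}$), so a maximum is attained at some $A_0$; if that maximum lay in $\Gamma_{\bbK_p^{m-1,n}}$, pick $c\in\bbK_p^{m-1,n}$ of the same valuation, note that $(\zeta_{p^m}-A_0)/c$ is a unit whose residue lies in $\bbF_p$ because the extension is totally ramified, and subtract a lift $u\in\bbZ$ to get $v_p(\zeta_{p^m}-A_0-cu)>v_p(\zeta_{p^m}-A_0)$, contradicting maximality. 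Finally, a smaller point of precision: $\calR_p^{m,n}$ is not itself a key polynomial --- the key polynomial is $T-A_m\in\bbK_p^{m-1,n}[T]$, and $\calR_p^{m,n}$ arises by re-expanding $A_m$ in powers of $\pi_p^{m-1,n}$ with digits in $\{0,\dots,p-1\}$, which is exactly how the paper gets coefficients in $\bbZ\subset\bbZ_{(p)}$; your phrase about ``replacing Teichm\"uller coefficients'' gestures at this but conflates the two polynomial rings.
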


Moreover, since for $n=1$ the recurrence polynomials are eventually stable (i.e. we have $\calR_p^{m,1}(T)=\calR_p^{\geq 4,1}(T)$ for all $m\geq 4$),  we make a conjecture on the  stability of the recurrence polynomials:
\begin{conjecture}
    For every $n\geq 1$, the series of recurrence polynomials $\left\{\calR_p^{m,n}(T)\right\}_{m\geq 3}$ in \Cref{thm:40345} can be chosen to be eventually stable, i.e. for $m$ sufficiently large, we have
    $$\calR_p^{m,n}=\calR_p^{m+1,n}=\calR_p^{m+2,n}=\cdots.$$
\end{conjecture}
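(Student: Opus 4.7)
The plan is to establish the conjecture by extracting a ``universal'' recurrence polynomial from the limiting behavior of Mal'cev-Neumann expansions along the cyclotomic tower, leveraging the framework of \cite{WangYuan2022} that underlies \Cref{thm:bestuniformizer} and \Cref{thm:40345}.

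\emph{Step 1 (Normalization).} Fix $n\geq 1$. A valuation count gives $v_p(\zeta_{p^m}-1) = 1/(p^{m-1}(p-1))$ and $v_p(\pi_p^{m-1,n}) = 1/(p^{m-2}(p-1)p^n)$, so their ratio $p^{1-n}$ is independent of $m$. Hence there exist integers $a_n, b_n$ (independent of $m$) with
$$u_m \coloneqq \frac{(\zeta_{p^m}-1)^{a_n}}{(\pi_p^{m-1,n})^{b_n}}$$
a unit in the ring of integers of $\bbK_p^{m,n}$. Using the explicit truncated expansion of $\zeta_{p^m}$ in the Mal'cev-Neumann field from \cite{WangYuan2022}, I would show that the coefficients of $u_m$, truncated modulo any fixed positive valuation, are eventually independent of $m$. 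Heuristically, the cyclotomic tower is ``self-similar'' through any fixed-valuation window once $m$ is large enough.

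\emph{Step 2 (Stability of the polynomial).} The recurrence polynomial $\calR_p^{m,n}(T)$ is pinned down (modulo a high-valuation ideal) by the requirement that $\zeta_{p^m} - \calR_p^{m,n}(\pi_p^{m-1,n})$ has valuation exceeding a threshold $N$ determined by $\alpha_p^{m,n}, \beta_p^{m,n}$. For a judicious choice of the exponents---exactly as in \Cref{thm:bestuniformizer} for $n=1$, where $\alpha_p^{m,1}=-2p+2$, $\beta_p^{m,1}=1$---this threshold can be taken uniformly bounded in $m$. Inverting the normalization of Step~1 then yields explicit formulas for the coefficients of $\calR_p^{m,n}$ as polynomial expressions in the stabilizing coefficients of $u_m$, producing a limit polynomial $\calR_p^{\geq M,n}$ as desired.

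\emph{Main obstacle.} The principal difficulty is controlling the \emph{iterative} amplification in the recurrence: since $\pi_p^{m,n}$ is itself defined from $\pi_p^{m-1,n}$, tiny $m$-dependent fluctuations at one level could propagate up the tower and destroy apparent stability. To rule this out, one must produce a simultaneous choice $(\alpha,\beta,\calR)$ that works for all sufficiently large $m$, not merely a per-level approximation. I would attack this via MacLane's inductive valuation theory (cf.\ the paper's keywords): the data $(\alpha,\beta,\calR)$ corresponds to an inductive valuation $v$ on $\bbQ_p(T)$, and the conjecture amounts to showing that the sequence of valuations produced by \Cref{thm:40345} eventually stabilizes inside the Berkovich disk $\DBerk$. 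As a concrete sanity check, I would first compute $\calR_p^{3,2}$, $\calR_p^{4,2}$, $\calR_p^{5,2}$ explicitly: stabilization at some small $M$ would both confirm the conjecture in the first new case and provide a template for the general argument.
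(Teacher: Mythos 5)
The statement you are addressing is stated in the paper as a \emph{conjecture}: the authors give no proof, and the only evidence they offer is the $n=1$ case, where the explicit computation behind \Cref{thm:bestuniformizer} shows $\calR_p^{m,1}=\calR_p^{\geq 4,1}$ for all $m\geq 4$. Your submission is a research plan rather than a proof, and it does not close the conjecture. The two load-bearing claims are both left unestablished. First, the assertion in Step~1 that the coefficients of $u_m$ ``truncated modulo any fixed positive valuation are eventually independent of $m$'' is justified only by the heuristic of self-similarity of the cyclotomic tower. The truncated expansions of $\zeta_{p^m}$ from \cite{WangYuan2022} do have a self-similar shape, but the relevant scale is $e_{m,n}=1/(p^{m+n-1}(p-1))$, which shrinks with $m$; a ``fixed-valuation window'' therefore captures more and more terms of the canonical expansion as $m$ grows, and the terms involve $\sigma_m=\sum_{k\geq m}p^{-1/p^k}$, whose leading behaviour also shifts with $m$. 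Nothing in your argument converts the heuristic into a statement about the finitely many coefficients that actually determine $\calR_p^{m,n}$.

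Second, Step~2 assumes that the threshold $N$ --- equivalently, the integer $d$ with $v_p(\zeta_{p^m}-\calR_p^{m,n}(\pi_p^{m-1,n}))=d\cdot e_{m,n}$ in the paper's proof of \Cref{thm:40345} --- ``can be taken uniformly bounded in $m$.'' The existence proof via MacLane's inductive valuations gives no control whatsoever on $d$ as $m$ varies: $d$ is read off from the key value $\lambda_s$ of the last degree-one key polynomial, and for $n\geq 2$ its boundedness in $m$ is essentially equivalent to the stability you are trying to prove, so assuming it is circular. (For $n=1$ one has $d=2p-1$ for all $m$, but that is an output of the explicit computation, not an input.) You correctly identify the real obstruction --- the iterative propagation of $m$-dependent fluctuations through the recurrence $\pi_p^{m-1,n}\rightsquigarrow\pi_p^{m,n}$ --- but the proposal ends by saying you \emph{would} attack it via MacLane theory and \emph{would} compute $\calR_p^{3,2},\calR_p^{4,2},\calR_p^{5,2}$ as a sanity check. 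Those computations, and a uniform-in-$m$ bound on $d$ together with a stabilization statement for the degree-one key polynomials $\phi_s=T-A_m$ in the Berkovich disc, are precisely what a proof would require; until they are supplied the conjecture remains open.
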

\begin{remark}
    For the proof of \Cref{thm:40345}, we use the theory of inductive valuations of MacLane. This is inspired by the algorithm used by \texttt{\bfseries MAGMA} (cf. \cite{MR1484478}) and a developing \texttt{\bfseries SageMath} (cf. \cite{sagemath}) package \texttt{\bfseries henselization} (cf. \cite{RuthCode}), which compute a defining polynomial\footnote{For any $p$-adic field $K$, by primitive root theorem, $K$ is generated by a single element $\alpha$ over $\bbQ_p$. Then the defining polynomial is defined to be the minimal polynomial of $\alpha$ over $\bbQ_p$.} $\mathrm{Def}_f(T)$ of the splitting field $\mathrm{Split}(f)$ of a polynomial $f\in\bbQ_p[T]$ over $\bbQ_p$, where $\mathrm{Def}_f$ is guaranteed to be Eisenstein when $\mathrm{Split}(f)/\bbQ_p$ is totally ramified.
\end{remark}

\subsection*{Acknowledgement} The authors would like to express their gratitude to Tim Dokchitser, Maurizio Monge, Sebastian Pauli, Julian Rüth and Tudor Micu for numerous helpful discussions on Ore-MacLane algorithms through email and GitHub Issues (cf. \cite{RuthDiscussion}).

Shanwen Wang is supported by the Fundamental Research Funds for the Central Universities, the Research Funds of Renmin University of China No.2020030251 and The National Natural Science Foundation of China (Grant No.11971035).
\section{Uniformizer of $\bbK_p^{m,1}$}
The idea to construct the uniformizer of the false Tate curve extension $\bbK_p^{2,n}$ of $\bbQ_p$ for $n\geq 1$ in \cite[Theorem 3.23]{WangYuan2021} is to use the truncated expansion of $\zeta_{p^2}$ to construct an algebraic number with a ``nice'' $p$-adic valuation. Basically, we would like to use the same idea to construct the uniformizer of $\bbK_p^{m,n}$ for $m\geq 3$. Let $\sigma_n=\sum_{k=n}^{+\infty}p^{-\frac{1}{p^k}}$. By the recurrence relation for $\pi_p^{m,1}$, to prove \Cref{thm:bestuniformizer}, we are reduced to compute the truncated expansions of $\left(\zeta_{p^{m+1}}-1\right)^{-2p+2}$ and $\left(A_{p,m}^{(\beta)}\right)^k$ with
\[A_{p,m}^{(\beta)}=(-1)^m\zetax p^{\frac{1}{p^{m-1}(p-1)}}\sigma_m\left(1+(-1)^m\beta\zetax p^{\frac{1}{p^{m-1}(p-1)}}\right)+O\left(p^{\frac{2}{p^{m-1}(p-1)}}\right),\]
$\beta\in {\bbZ}_p$ and $1\leq k\leq 2p-1$.

To simplify the statement, for every integer $s\geq 1$, we represent the indicator function of the set $\{1,2,\cdots,s\}$ by $\bbone_{\leq s}(x)$, i.e.
\[\bbone_{\leq s}(x)=\begin{cases}
        1, & \text{ if }x\in\{1,2,\cdots,s\}; \\
        0, & \text{ otherwise}.
    \end{cases}\]
Besides that, the indicator function of the set $\{s\}$ is denoted by $$\bbone_{s}(x)=\bbone_{\leq 1}(x-s+1).$$

\subsection{$\left(\frac{2}{p^{n-1}(p-1)}\right)$-truncated expansion of $\zeta_{p^n}$ in the $p$-adic Mal'cev-Neumann field}
Let $\calO_{\breve{\bbQ}_p}=W(\bar{\bbF}_p)$ be the ring of Witt vectors over $\bar{\bbF}_p$ and let $\bbL_p$ be the $p$-adic Mal'cev-Neumann field $\calO_{\breve{\bbQ}_p}((p^{\bbQ}))$ (cf. \cite[Section 4]{Poonen1993}). Every element $\alpha$ of $\bbL_p$ can be uniquely written as
\begin{equation}
    \label{expansioncan}\sum_{x\in \bbQ}[\alpha_x]p^x.
\end{equation}
For any $\alpha=\sum_{x\in \bbQ}[\alpha_x]p^x\in \bbL_p$, we set $ \mathrm{Supp}(\alpha)=\{x\in \bbQ:  \alpha_x\neq 0\}$, which is well-orderd by the definition of $\bbL_p$.
Thus, we can define the $p$-adic valuation $v_p$ by the formula: $$v_p(\alpha)=\begin{cases} \inf \mathrm{Supp}(\alpha), & \text{ if } \alpha\neq 0; \\ \infty, & \text{if } \alpha=0.\end{cases} $$
The field $\bbL_p$ is complete for the $p$-adic topology, and it is also algebraically closed. Moreover, it is the maximal complete immediate extension\footnote{A valued field extension $(E,w)$ of  $(F,v)$ is an immediate extension, if  $(E,w)$ and $(F,v)$ have the same residue field. A valued field  $(E,w)$  is maximally complete if it has no immediate extensions other than $(F,v)$ itself.} of $\overline{\bbQ}_p$. The field $\bbL_p$ is spherical complete\footnote{A valued field is said to be spherical complete, if the intersection of every decreasing sequence of closed balls is nonempty.}, and the field $\bbC_p$ of $p$-adic complex numbers is not spherical complete, which can be continuously embedded into $\bbL_p$.
\begin{definition}\leavevmode
    \begin{enumerate}
        \item For any $\alpha\in \bbL_p$, we call the unique expression \Cref{expansioncan} of $\alpha$, the \textbf{canonical expansion} of $\alpha$.
        \item Let $r\in \bbQ$ and $\alpha\in \bbL_p$, we rewrite the canonical expansion of $\alpha$ in the following way
              \[\alpha=\sum_{x\in \bbQ, x< r  }[\alpha_x]p^x+ O(p^r),\]
              and we call this expression the $r$-\textbf{truncated canonical expansion} of $\alpha$.
        \item If $\sum_{x\in \bbQ, x< r  }\beta_xp^x+ O(p^r)$ is another element in $\bbL_p$ with $\beta_x\in \calO_{\breve{\bbQ}_p}$ such that
              \[\sum_{x\in \bbQ, x< r  }\beta_xp^x\equiv \alpha \bmod{p^r},\] then we call $\sum_{x\in \bbQ, x< r  }\beta_xp^x+ O(p^r)$ a $r$-\textbf{truncated expansion} of $\alpha$.
    \end{enumerate}
\end{definition}

The following $\frac{2}{p^{n-2}(p-1)}$-truncated expansion of $\zeta_{p^n}$ is established in \cite[Theorem 1.6]{WangYuan2022}:
\begin{proposition}\label{truncatedfinal}
    For $n\geq 2$, we have the following $\frac{2}{p^{n-2}(p-1)}$-truncated expansion of $\zeta_{p^n}$:
    \begin{align*}
        \zeta_{p^n}= & \sum_{k=0}^{p-1}\frac{(-1)^{nk}}{k!}\zetax^k p^{\frac{k}{p^{n-1}(p-1)}}+\sum_{k=0}^{p-1}\frac{(-1)^{n(k+1)}}{k!}\zetax^{k+1}p^{\frac{k+p}{p^{n-1}(p-1)}}\sigma_n \\
                     & \quad-\sum_{k=1}^{p-1}\frac{H_k}{k!}(-1)^{n(k+1)}\zetax^{k+1} p^{\frac{k+p}{p^{n-1}(p-1)}}                                                                       \\
                     & \quad+\frac{1}{2}\zetax^2 p^{\frac{2}{p^{n-2}(p-1)}}\sigma_n^2+\frac{(-1)^n}{2}\zetax^3 p^{\frac{2}{p^{n-2}(p-1)}-\frac{p-2}{p^n(p-1)}}                          \\
                     & \quad+O\left(p^{\frac{2}{p^{n-2}(p-1)}}\right).
    \end{align*}
\end{proposition}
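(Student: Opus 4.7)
The plan is to establish the expansion by induction on $n \geq 2$, lifting the truncated expansion at level $n-1$ to level $n$ via the defining relation $\zeta_{p^n}^p = \zeta_{p^{n-1}}$. Setting $\pi_n = \zeta_{p^n} - 1 \in \bbL_p$, this relation becomes
\[\pi_{n-1} \;=\; \sum_{k=1}^{p}\binom{p}{k}\pi_n^k \;=\; p\,\pi_n + \binom{p}{2}\pi_n^2 + \cdots + p\,\pi_n^{p-1} + \pi_n^p,\]
which I view as a polynomial equation to be solved for $\pi_n$ given the inductively-known expansion of $\pi_{n-1}$. For the base case $n = 2$, where $\pi_1 = \zeta_p - 1$ has leading term $\zetax\, p^{1/(p-1)}$, the $\frac{2}{p-1}$-truncated solution was already worked out in \cite[Theorem 3.23]{WangYuan2021} during the construction of the uniformizer $\pi_p^{2,1}$.

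For the inductive step, I would substitute a Mal'cev-Neumann ansatz
\[\pi_n \;=\; \sum_j c_j\, p^{x_j}, \qquad c_j \in \calO_{\breve{\bbQ}_p},\ x_j \in \bbQ,\]
into the lifting relation above and equate coefficients against the inductive expansion of $\pi_{n-1}$. Three sources of contributions to the target truncation window $\bigl[\tfrac{1}{p^{n-1}(p-1)},\, \tfrac{2}{p^{n-2}(p-1)}\bigr)$ must be tracked carefully. The linear term $p\,\pi_n$ shifts the principal series at level $n$ up by valuation $1$ and is responsible for the $\sigma_n$-dependent tails in the target formula. The middle terms $\binom{p}{k}\pi_n^k$ with $2 \leq k \leq p-1$ are handled via the Wolstenholme-type congruence
\[\binom{p}{k} \;=\; \frac{(-1)^{k-1}\,p}{k}\bigl(1 - p\,H_{k-1} + O(p^2)\bigr),\]
which explains why the harmonic numbers $H_k$ appear as coefficients of $p^{(k+p)/(p^{n-1}(p-1))}$ in the stated expansion. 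Finally, the top term $\pi_n^p$ matches $\pi_{n-1}$ to leading order but also produces cross-products of the two leading coefficients of $\pi_n$ after being raised to the $p$-th power, and these are what generate the quadratic correction $\frac{1}{2}\zetax^2\, p^{2/(p^{n-2}(p-1))}\sigma_n^2$ together with the $\frac{(-1)^n}{2}\zetax^3\, p^{2/(p^{n-2}(p-1))-(p-2)/(p^n(p-1))}$ term.

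The main obstacle is the combinatorial bookkeeping: one must check that every term arising from expanding the binomials $\binom{p}{k}\pi_n^k$ either lies inside the truncation window and contributes an explicit summand of the target formula, or lies outside the window and can be absorbed into the $O(p^{2/(p^{n-2}(p-1))})$ error. This reduces to valuation tracking using $v_p\bigl(\binom{p}{k}\bigr) = 1$ for $1 \leq k \leq p-1$ together with the sharper congruence above, and then appealing to the well-orderedness of $\Supp{\pi_n}$ in $\bbL_p$ combined with spherical completeness of $\bbL_p$ to justify the iterative determination of the coefficients $c_j$. Once this finite enumeration is carried out, each coefficient in the stated truncated expansion of $\zeta_{p^n}$ is pinned down uniquely.
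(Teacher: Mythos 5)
The paper itself contains no proof of this proposition: it is quoted from \cite[Theorem 1.6]{WangYuan2022}, so there is no internal argument to compare yours against, and your sketch has to stand on its own. Your global strategy (induction on $n$ through the relation $\zeta_{p^n}^p=\zeta_{p^{n-1}}$, with the $n=2$ case taken from the earlier work) is a sensible route, but the inductive step as you describe it does not close, and the mechanism you attribute to it cannot operate at the stated precision.

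Concretely, write $\pi_n=\zeta_{p^n}-1$ and $r_n=\frac{2}{p^{n-2}(p-1)}$, so the inductive hypothesis gives $\pi_{n-1}$ only modulo $p^{r_{n-1}}$, and $r_{n-1}\leq 1$ for all $n\geq 3$, $p\geq 3$. Every term $\binom{p}{k}\pi_n^k$ with $1\leq k\leq p-1$ has valuation at least $1+\frac{1}{p^{n-1}(p-1)}>r_{n-1}$, so the linear term $p\,\pi_n$ and all the ``middle terms'' you plan to control with the Wolstenholme-type congruence are invisible modulo $p^{r_{n-1}}$: they cannot be responsible for the $\sigma_n$-tails, nor can they ``explain'' the harmonic numbers $H_k$, in the passage from level $n-1$ to level $n$. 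If you insist on generating the $H_k$-terms of level $n$ from the binomial coefficients at step $n$, you would need $\pi_{n-1}$ to precision strictly greater than $1$, which is more than the proposition itself supplies at level $n-1$, and the induction breaks down. At the precision actually available the relation degenerates to a Frobenius-type congruence: writing the claimed expansion as $1+\sum_i x_i$, all mixed multinomial terms and all contributions carrying a factor $\binom{p}{k}$ with $1\le k\le p-1$ fall into the error, so the step reduces to checking $\sum_i x_i^p\equiv \pi_{n-1}\bmod p^{r_{n-1}}$ (using $\zetax^{p}=-\zetax$ and $\sigma_n^p\equiv\sigma_{n-1}$ at the relevant precision); the $H_k$ and the $\sigma$-tails simply propagate from level $n-1$, the binomial/Wolstenholme analysis being genuinely needed only at the base case $n=2$. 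Moreover, the precision bookkeeping that does require an argument is absent from your sketch: you must show that a congruence modulo $p^{r_{n-1}}$, with $r_{n-1}=p\,r_n$, determines $\pi_n$ modulo $p^{r_n}$ --- this is exactly borderline, since a perturbation $\delta$ of $\pi_n$ with $v_p(\delta)<\frac{1}{p-1}$ changes $(1+\pi_n)^p$ by valuation $\min\left(1+v_p(\delta),\,p\,v_p(\delta)\right)=p\,v_p(\delta)$ --- and that the ambiguity among the $p$ roots of $X^p=\zeta_{p^{n-1}}$ is harmless because distinct roots differ by valuation $\frac{1}{p-1}\geq r_n$. (Also, spherical completeness of $\bbL_p$ plays no role in such a verification, and \cite[Theorem 3.23]{WangYuan2021} is the uniformizer statement, not the full $\frac{2}{p-1}$-truncated expansion of $\zeta_{p^2}$ that your base case needs.)
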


Thus, we have the $\left(\frac{2}{p^n(p-1)}-\frac{2}{p^n}\right)$-truncated expansion of $\left(\zeta_{p^{n+1}}-1\right)^{-2p+2}$:
\begin{proposition}\label{lem:fenmu}
    For an integer $n\geq 2$, we have
    \begin{align*}
          & \left(\zeta_{p^{n+1}}-1\right)^{-2p+2}                                                                                                                           \\
        = & p^{-\frac{2}{p^n}}\left(1-(-1)^n\zetax p^{\frac{1}{p^n(p-1)}}-\bbone_3(p)\cdot 3^{\frac{2}{3^n\cdot 2}}\sigma_{n+1}+O\left(p^{\frac{2}{p^n(p-1)}}\right)\right).
    \end{align*}
\end{proposition}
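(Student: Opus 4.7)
The plan is to apply \Cref{truncatedfinal} with $n$ replaced by $n+1$ and reduce the problem to a binomial expansion. The leading term of $\zeta_{p^{n+1}}-1$ is $\alpha:=(-1)^{n+1}\zetax p^{1/(p^n(p-1))}$, so $v_p((\zeta_{p^{n+1}}-1)^{-2p+2})=-2/p^n$. Writing $\zeta_{p^{n+1}}-1=\alpha(1+\epsilon)$ with $v_p(\epsilon)\geq 1/(p^n(p-1))$, one has $\alpha^{-2p+2}=p^{-2/p^n}$ (using $((-1)^{n+1})^{-2p+2}=1$ and $\zetax^{-2(p-1)}=1$), hence
\[
(\zeta_{p^{n+1}}-1)^{-2p+2} = p^{-2/p^n}(1+\epsilon)^{-2p+2}.
\]
Since the target precision inside the parentheses is $O(p^{2/(p^n(p-1))})$ and $v_p(\epsilon^2)\geq 2/(p^n(p-1))$, the binomial expansion collapses to $(1+\epsilon)^{-2p+2}=1+(-2p+2)\epsilon+O(p^{2/(p^n(p-1))})$ (the integer binomials $\binom{-2p+2}{k}$ lie in $\bbZ_p$, so they do not reduce valuations). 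It therefore suffices to compute $\epsilon$ modulo $p^{2/(p^n(p-1))}$, equivalently $\zeta_{p^{n+1}}-1-\alpha$ modulo $p^{3/(p^n(p-1))}$.

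I would then enumerate the terms of \Cref{truncatedfinal} (with $n\mapsto n+1$) whose valuation is strictly less than $3/(p^n(p-1))$. From the first sum only the $k=2$ term $\tfrac{1}{2}\zetax^2 p^{2/(p^n(p-1))}$ survives past $\alpha$. From the second sum only the $k=0$ contribution $(-1)^{n+1}\zetax p^{1/(p^{n-1}(p-1))}\sigma_{n+1}$ can matter, and the inequality $p/(p^n(p-1))-1/p^{n+1}<3/(p^n(p-1))$ simplifies to $p^2-4p+1<0$, forcing $p<2+\sqrt{3}$ and hence $p=3$; this is exactly where the indicator $\bbone_3(p)$ originates. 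Analogous valuation estimates (all reducing to elementary inequalities such as $(2p-1)(p-2)\geq 0$) rule out the third sum, both second-order terms, and the error $O(p^{2/(p^{n-1}(p-1))})$ of \Cref{truncatedfinal}. Dividing these contributions by $\alpha$ and using $1/(p^{n-1}(p-1))-1/(p^n(p-1))=1/p^n$ yields
\[
\epsilon = \frac{(-1)^{n+1}}{2}\zetax p^{1/(p^n(p-1))} + \bbone_3(p)\, p^{1/p^n}\sigma_{n+1} + O(p^{2/(p^n(p-1))}).
\]

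The proof then concludes by multiplying by $-2p+2=-2(p-1)$ and reducing the resulting integer coefficients modulo $p$. First, $(-2p+2)\cdot\tfrac{(-1)^{n+1}}{2}\zetax=(p-1)(-1)^n\zetax=-(-1)^n\zetax+p(-1)^n\zetax$; the $p$-multiple has valuation $1+1/(p^n(p-1))>2/(p^n(p-1))$ and is absorbed. Second, when $p=3$ we have $-2p+2=-4=-1-p$, so $(-2p+2)p^{1/p^n}\sigma_{n+1}=-p^{1/p^n}\sigma_{n+1}-p\cdot p^{1/p^n}\sigma_{n+1}$; every summand of the second piece has valuation $\geq 1+(p-1)/p^{n+1}>2/(p^n(p-1))$ and is again absorbed. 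Noting $p^{1/p^n}=3^{2/(3^n\cdot 2)}$ when $p=3$ matches the stated formula. The main obstacle is the careful case split $p=3$ versus $p\geq 5$ for the $\sigma_{n+1}$ contributions, combined with Teichm\"uller bookkeeping: an integer such as $p-1$ is \emph{not} the Teichm\"uller lift $[-1]$ in $W(\bar{\bbF}_p)$, so the $p$-integer remainder must be peeled off and shown to be of higher valuation before one can read off the canonical expansion.
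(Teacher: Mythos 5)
Your argument is correct and follows essentially the same route as the paper: apply \Cref{truncatedfinal} with $n+1$ in place of $n$, factor out the leading term $(-1)^{n+1}\zetax p^{1/(p^n(p-1))}$, and expand $(1+\epsilon)^{-2p+2}$ binomially keeping only the linear term, the only difference being that the paper runs the $p\geq 5$ and $p=3$ cases as two separate computations while you unify them and isolate the inequality $p^2-4p+1<0$ as the source of the indicator $\bbone_3(p)$. The valuation bookkeeping (including absorbing the $p$-multiples of $p-1$ and $-4$ into the error term) matches the paper's implicit reductions, so no gap remains.
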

\begin{proof}
    For $p\geq 5$, by using the truncated expansion of $\zeta_{p^{n+1}}$, we have
    \begin{align*}
          & \left(\zeta_{p^{n+1}}-1\right)^{-2p+2}                                                                                                       \\
        = & \left((-1)^{n+1}\zetax p^{\frac{1}{p^n(p-1)}}+\frac{1}{2}\zetax^2 p^{\frac{2}{p^n(p-1)}}+O\left(p^{\frac{3}{p^n(p-1)}}\right)\right)^{-2p+2} \\
        = & p^{-\frac{2}{p^n}}\left(1-\frac{(-1)^n}{2}\zetax p^{\frac{1}{p^n(p-1)}}+O\left(p^{\frac{2}{p^n(p-1)}}\right)\right)^{-2p+2} .
    \end{align*}

    By using binomial expansion for negative exponent and truncation, we obtain
    \begin{align*}
          & \left(\zeta_{p^{n+1}}-1\right)^{-2p+2}                                                                                                                                        \\
        = & p^{-\frac{2}{p^n}}\sum_{k=0}^\infty \binom{-2p+2}{k}\left(-\frac{(-1)^n}{2}\zetax p^{\frac{1}{p^n(p-1)}}+O\left(p^{\frac{2}{p^n(p-1)}}\right)\right)^k                        \\
        = & p^{-\frac{2}{p^n}}\left(1+(-2p+2)\left(-\frac{(-1)^n}{2}\zetax p^{\frac{1}{p^n(p-1)}}+O\left(p^{\frac{2}{p^n(p-1)}}\right)\right)+O\left(p^{\frac{2}{p^n(p-1)}}\right)\right) \\
        = & p^{-\frac{2}{p^n}}\left(1-(-1)^n\zetax p^{\frac{1}{p^n(p-1)}}+O\left(p^{\frac{2}{p^n(p-1)}}\right)\right).
    \end{align*}

    For $p=3$, we have an extra term in the truncated expansion:
    \begin{align*}
        \zeta_{3^{n+1}}-1 = (-1)^{n+1}\zeta_4 3^{\frac{1}{3^n\cdot 2}}\left(1-\frac{(-1)^n}{2}\zeta_4 3^{\frac{1}{3^n\cdot 2}}+3^{\frac{2}{3^n\cdot 2}}\sigma_{n+1}+O\left(3^{\frac{2}{3^n\cdot 2}}\right)\right).
    \end{align*}
    Therefore, by power series expansion and truncation,
    \begin{align*}
        \left(\zeta_{3^{n+1}}-1\right)^{-4}= & 3^{-\frac{2}{3^n}}\left(1-\frac{(-1)^n}{2}\zeta_4 3^{\frac{1}{3^n\cdot 2}}+3^{\frac{2}{3^n\cdot 2}}\sigma_{n+1}+O\left(3^{\frac{2}{3^n\cdot 2}}\right)\right)^{-4} \\
        =                                    & 3^{-\frac{2}{3^n}}\left(1-(-1)^n\zeta_4 3^{\frac{1}{3^n\cdot 2}}-3^{\frac{2}{3^n\cdot 2}}\sigma_{n+1}+O\left(3^{\frac{2}{3^n\cdot 2}}\right)\right).
    \end{align*}
\end{proof}

\subsection{Truncated expansion of $A_{p,n}^{(\beta)}$}
In this paragraph, we give the $\left(\frac{2}{p^{n-1}(p-1)}\right)$-truncated expansion of $(A_{p,n}^{(\beta)})^k$, for $1\leq k\leq 2p-1$.

\begin{lemma}\label{lem:29041new}
    Let $n\geq 2$ be an integer. For $1\leq k\leq 2p-1$, we have
    \begin{align*}
        \left(A_{p,n}^{(\beta)}\right)^k= & (-1)^{nk}\zetax^k p^{\frac{k}{p^n(p-1)}}                                                                                                    \\
                                          & \quad+\bbone_{\leq p+1}(k)\cdot k(-1)^{nk}\zetax^k p^{\frac{k+p-1}{p^n(p-1)}}\sigma_{n+1}                                                   \\
                                          & \quad+\bbone_2(k)\cdot \zetax^2 p^{\frac{2}{p^{n-1}(p-1)}}\sigma_{n+1}^2+\bbone_3(k)\cdot 3(-1)^n\zetax^3 p^{\frac{2p^2-p+2}{p^{n+1}(p-1)}} \\
                                          & \quad+\bbone_{\leq p-1}(k)\cdot \beta k(-1)^{n(k+1)}\zetax^{k+1}p^{\frac{k+p}{p^n(p-1)}}                                                    \\
                                          & \quad+\bbone_{1}(k)\cdot \beta\zetax^2 p^{\frac{2}{p^{n-1}(p-1)}}\sigma_{n+1}+O\left(p^{\frac{2}{p^{n-1}(p-1)}}\right).
    \end{align*}
\end{lemma}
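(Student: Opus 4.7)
The plan is to expand $\left(A_{p,n}^{(\beta)}\right)^k$ by the multinomial theorem and retain only those monomials whose $p$-adic valuation is strictly below the truncation threshold $\tau \coloneqq \frac{2}{p^{n-1}(p-1)} = \frac{2p^2}{p^{n+1}(p-1)}$. The first step is to peel off the leading summand of $\sigma_n$: writing $\sigma_n = p^{-1/p^n} + \sigma_{n+1}$ and distributing the $\beta$-factor in the given expression for $A_{p,n}^{(\beta)}$, one obtains a decomposition
\[A_{p,n}^{(\beta)} = U + V + W_0 + W_1 + R,\]
where $U = (-1)^n\zetax p^{\frac{1}{p^n(p-1)}}$ is the principal monomial of valuation $\frac{1}{p^n(p-1)}$, $V = (-1)^n\zetax p^{\frac{1}{p^{n-1}(p-1)}}\sigma_{n+1}$ is the $\sigma_{n+1}$-correction, $W_0 = \beta\zetax^2 p^{\frac{p+1}{p^n(p-1)}}$ and $W_1 = \beta\zetax^2 p^{\frac{2}{p^{n-1}(p-1)}}\sigma_{n+1}$ are the pure and $\sigma_{n+1}$-twisted $\beta$-corrections, and $R$ absorbs the $O(p^\tau)$ remainder.

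I would then expand $(U + V + W_0 + W_1 + R)^k$ and compute the valuation of every cross-monomial. Since $v(U)$ is the smallest of the five contributions and $U^k$ has valuation $\frac{k}{p^n(p-1)}$, the budget for corrections is $\frac{2p - k}{p^n(p-1)}$, which tightens as $k$ grows so that only a handful of compositions survive. A direct case analysis then produces each summand in the statement: the leading $U^k$ gives the first summand $(-1)^{nk}\zetax^k p^{\frac{k}{p^n(p-1)}}$; the linear $V$-term $k U^{k-1} V$ survives precisely when $k \leq p+1$, yielding the $\bbone_{\leq p+1}$ summand; the linear $W_0$-term survives precisely when $k \leq p-1$, yielding the $\bbone_{\leq p-1}$ summand; the full $V^2$-contribution at $k = 2$ gives the $\bbone_2$ summand, retained as a $\sigma_{n+1}^2$-factored expression since $V^2 = \zetax^2 p^\tau \sigma_{n+1}^2$ has support entirely below $\tau$; at $k = 3$, the leading of $\binom{3}{2} U V^2$ still lies below $\tau$ while its sub-leading does not, producing the $\bbone_3$ summand with coefficient $3(-1)^n\zetax^3$ after collapsing $\sigma_{n+1}^2$ to its leading monomial $p^{-2/p^{n+1}}$; and the linear $W_1$-term $k U^{k-1} W_1$ survives only at $k = 1$, yielding the mixed $\bbone_1$ summand.

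Finally, I would verify that every other cross-monomial has valuation $\geq \tau$ for every $k$ in the range $1 \leq k \leq 2p - 1$, so that it is absorbed into the $O$-remainder. This reduces to elementary arithmetic inequalities in $p^{n+1}(p-1)$, together with the uniform estimate $v(R) \geq \tau$ inherited from \Cref{truncatedfinal}. Any cross-term involving at least one factor of $R$ has valuation $\geq \tau$ automatically, since the remaining factors all have positive valuation; and compositions such as $V^3$, $VW_0$, $W_0^2$, or higher-degree mixed monomials cost enough extra valuation that, combined with $v(U^{k-j}) = \frac{k-j}{p^n(p-1)}$, they exceed $\tau$ throughout the range of $k$.

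The principal obstacle will be the careful bookkeeping around $k = 2, 3$, where the $V^2$-contribution sits at the boundary: at $k = 2$ the entire $V^2$ lies below $\tau$ and is retained with its $\sigma_{n+1}^2$-factor intact, whereas at $k = 3$ only its leading monomial stays below $\tau$, and this discrepancy dictates the different form of the $\bbone_2$ and $\bbone_3$ summands. Matching the exact coefficients (signs, factorials, and multinomial weights such as $\binom{3}{1,2} = 3$) in these boundary cases is where the bulk of the verification lies.
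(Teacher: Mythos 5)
Your proposal is correct and is essentially the paper's own argument in a lightly reorganized form: the paper first isolates the $\beta$-linear term and then binomially expands $\sigma_n^k=(p^{-1/p^n}+\sigma_{n+1})^k$, whereas you split $\sigma_n$ first and expand multinomially, but the valuation thresholds, the surviving terms (including the boundary cases $k\leq p+1$, $k\leq p-1$, $k=2,3$, $k=1$), and the resulting coefficients are identical.
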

\begin{proof}
    By expanding the product and truncation, we obtain
    \begin{align*}
        \left(A_{p,n}^{(\beta)}\right)^k= & (-1)^{nk}\zetax^k p^{\frac{k}{p^{n-1}(p-1)}}\sigma_n^k+\beta k(-1)^{n(k+1)}\zetax^{k+1}p^{\frac{k+1}{p^{n-1}(p-1)}}\sigma_n^k \\
                                          & \quad+O\left(p^{\frac{2}{p^{n-1}(p-1)}}\right).
    \end{align*}
    Since the condition $\frac{k+1}{p^{n-1}(p-1)}-\frac{k}{p^n}<\frac{2}{p^{n-1}(p-1)}$ implies
    $k<p$, we can rewrite the expansion of $\left(A_{p,n}^{(\beta)}\right)^k$ as
    \begin{align*}
         & (-1)^{nk}\zetax^k p^{\frac{k}{p^{n-1}(p-1)}}\sigma_n^k+\bbone_{\leq p-1}(k)\cdot \beta k(-1)^{n(k+1)}\zetax^{k+1}p^{\frac{k+1}{p^{n-1}(p-1)}}\sigma_n^k+O\left(p^{\frac{2}{p^{n-1}(p-1)}}\right).
    \end{align*}
    The lemma follows from the following estimation of the terms:
    \begin{itemize}
        \item For the term $(-1)^{nk}\zetax^k p^{\frac{k}{p^{n-1}(p-1)}}\sigma_n^k$, by binomial expansion one has
              \begin{align*}
                  (-1)^{nk}\zetax^k p^{\frac{k}{p^{n-1}(p-1)}}\sigma_n^k=  (-1)^{nk}\zetax^k \sum_{t=0}^k\binom{k}{t}p^{\frac{k}{p^{n-1}(p-1)}-\frac{k-t}{p^n}}\sigma_{n+1}^t.
              \end{align*}
              The condition $\frac{k}{p^{n-1}(p-1)}-\frac{k-t}{p^n}-\frac{t}{p^{n+1}}<\frac{2}{p^{n-1}(p-1)}$ implies
              \[ t=
                  \begin{cases}
                      0,1,  & \text{ if }k\leq p+1,k\neq 2,3; \\
                      0,1,2 & \text{ if }k=2,3;               \\
                      0,    & \text{ if }p+2\leq k\leq 2p-1.
                  \end{cases}\]
              For $k=3$ and $t=2$, by truncation we have
              \begin{align*}
                   & (-1)^n\zetax^3 \binom{3}{2}p^{\frac{3}{p^n(p-1)}+\frac{2}{p^n}}\sigma_{n+1}^2= 3(-1)^n\zetax^3 p^{\frac{2p^2-p+2}{p^{n+1}(p-1)}}+O\left(p^{\frac{2}{p^{n-1}(p-1)}}\right).
              \end{align*}

              By combining these cases and simplify the exponent, we have
              \begin{align*}
                    & (-1)^{nk}\zetax^k p^{\frac{k}{p^{n-1}(p-1)}}\sigma_n^k                                                             \\
                  = & (-1)^{nk}\zetax^k p^{\frac{k}{p^n(p-1)}}+\bbone_{\leq p+1}(k)\cdot k(-1)^{nk}\zetax^k p^{\frac{k+p-1}{p^n(p-1)}}   \\
                    & \quad+\bbone_2(k)\cdot \zetax^2 p^{\frac{2}{p^{n-1}(p-1)}}\sigma_{n+1}^2                                           \\
                    & \quad+\bbone_3(k)\cdot 3(-1)^n\zetax^3 p^{\frac{2p^2-p+2}{p^{n+1}(p-1)}}+O\left(p^{\frac{2}{p^{n-1}(p-1)}}\right).
              \end{align*}
        \item For the term $\beta k(-1)^{n(k+1)}\zetax^{k+1}p^{\frac{k+1}{p^{n-1}(p-1)}}\sigma_n^k$ with $1\leq k\leq p-1$, by binomial expansion one has
              \begin{align*}
                    & \beta k(-1)^{n(k+1)}\zetax^{k+1}p^{\frac{k+1}{p^{n-1}(p-1)}}\sigma_n^k                                                  \\
                  = & \beta k(-1)^{n(k+1)}\zetax^{k+1}p^{\frac{k+1}{p^{n-1}(p-1)}}\sum_{t=0}^k\binom{k}{t}p^{-\frac{k-t}{p^n}}\sigma_{n+1}^t.
              \end{align*}
              The condition $\frac{k+1}{p^{n-1}(p-1)}-\frac{k-t}{p^n}-\frac{t}{p^{n+1}}<\frac{2}{p^{n-1}(p-1)}$ implies
              \[t=\begin{cases}
                      0,1, & \text{ if }k=1;             \\
                      0,   & \text{ if }2\leq k\leq p-1.
                  \end{cases}\]
              Therefore,
              \begin{align*}
                    & \beta k(-1)^{n(k+1)}\zetax^{k+1}p^{\frac{k+1}{p^{n-1}(p-1)}}\sigma_n^k                                                  \\
                  = & \bbone_{\leq p-1}(k)\cdot \beta k(-1)^{n(k+1)}\zetax^{k+1}p^{\frac{k+p}{p^n(p-1)}}                                      \\
                    & \quad+\bbone_{1}(k)\cdot \beta\zetax^2 p^{\frac{2}{p^{n-1}(p-1)}}\sigma_{n+1}+O\left(p^{\frac{2}{p^{n-1}(p-1)}}\right).
              \end{align*}
    \end{itemize}
\end{proof}

\begin{corollary}\label{coro:5955}
    For an integer $n\geq 2$, we have
    \begin{align*}
         & \zeta_{p^{n+1}}-\sum_{k=0}^{p-1}\frac{(-1)^k}{k!}\left(A_{p,n}^{(\beta)}\right)^k                                                                                               \\  = & (-1)^{n+1}\zetax p^{\frac{2p-1}{p^n(p-1)}}\sigma_{n+1}                                                                                                                     +\sum_{k=1}^{p-1}\frac{(-1)^k\left(k\beta-H_k\right)}{k!}(-1)^{nk+n+1}\zetax^{k+1} p^{\frac{k+p}{p^n(p-1)}}                                        \\
         & \quad+\beta\zetax^2 p^{\frac{2}{p^{n-1}(p-1)}}\sigma_{n+1}-\bbone_3(p)\cdot\frac{(-1)^n}{2}\zetax^3 p^{\frac{2p^2-p+2}{p^{n+1}(p-1)}}+O\left(p^{\frac{2}{p^{n-1}(p-1)}}\right).
    \end{align*}
\end{corollary}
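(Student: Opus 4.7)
The plan is direct substitution followed by careful bookkeeping. Apply \Cref{truncatedfinal} with $n$ replaced by $n+1$ to expand $\zeta_{p^{n+1}}$, apply \Cref{lem:29041new} to expand each $\left(A_{p,n}^{(\beta)}\right)^k$ for $k=0,1,\ldots,p-1$, form the weighted sum $S:=\sum_{k=0}^{p-1}\frac{(-1)^k}{k!}\left(A_{p,n}^{(\beta)}\right)^k$, and collect monomials of $\zeta_{p^{n+1}}-S$ by $p$-adic order. Two groups cancel immediately: the leading ``main'' sums match termwise via $(-1)^k(-1)^{nk}=(-1)^{(n+1)k}$, and the $\sigma_{n+1}^2$-terms at order $\frac{2}{p^{n-1}(p-1)}$ (arising uniquely from $\bbone_2(k)$ in $S$ and from the fourth piece of \Cref{truncatedfinal}, both equal to $\frac{1}{2}\zetax^2 p^{\frac{2}{p^{n-1}(p-1)}}\sigma_{n+1}^2$) cancel identically.

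The nontrivial survivors come from two reindexing mismatches. First, the $H_k$-term of $\zeta_{p^{n+1}}$ and the $\bbone_{\leq p-1}(k)\cdot k\beta$-term of $\left(A_{p,n}^{(\beta)}\right)^k$ both live at order $\frac{k+p}{p^n(p-1)}$; using $(-1)^{n(k+1)}=-(-1)^{nk+n+1}$ together with $(-1)^{(n+1)(k+1)}=(-1)^k(-1)^{nk+n+1}$, they combine into the central sum $\sum_{k=1}^{p-1}\frac{(-1)^k(k\beta-H_k)}{k!}(-1)^{nk+n+1}\zetax^{k+1} p^{\frac{k+p}{p^n(p-1)}}$. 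Second, the $\sigma_{n+1}$-contribution to $S$, after reindexing $k\mapsto k+1$, reproduces the second sum of \Cref{truncatedfinal} only on $\{0,\ldots,p-2\}$; the residual $k=p-1$ summand has coefficient $\frac{(-1)^{(n+1)p}\zetax^p}{(p-1)!}=\frac{(-1)^{n}\zetax}{(p-1)!}$ (using $\zetax^p=-\zetax$ and $(-1)^{(n+1)p}=(-1)^{n+1}$), and Wilson's theorem writes $\frac{1}{(p-1)!}=-1+p\alpha$ with $\alpha\in\bbZ_p$, so the $p\alpha$-part contributes at $p$-adic order exceeding $\frac{2}{p^{n-1}(p-1)}$ and is absorbed in the error, yielding $(-1)^{n+1}\zetax p^{\frac{2p-1}{p^n(p-1)}}\sigma_{n+1}$. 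The $\bbone_1(k)$-contribution at $k=1$ then survives without cancellation as $+\beta\zetax^2 p^{\frac{2}{p^{n-1}(p-1)}}\sigma_{n+1}$.

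The indicator $\bbone_3(p)$ reflects an analogous range effect. The $\frac{(-1)^{n+1}}{2}\zetax^3 p^{\frac{2p^2-p+2}{p^{n+1}(p-1)}}$-term from the fifth piece of \Cref{truncatedfinal} pairs with the $\bbone_3(k)$-contribution in $S$ at $k=3$: for $p\geq 5$ one has $3\leq p-1$ and the two cancel exactly, while for $p=3$ the range $\{0,\ldots,p-1\}=\{0,1,2\}$ excludes $k=3$, so the $\zeta_{p^{n+1}}$-term survives, contributing $-\frac{(-1)^n}{2}\zetax^3 p^{\frac{2p^2-p+2}{p^{n+1}(p-1)}}$ after subtraction. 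The main (and indeed only) obstacle is organizational: roughly a dozen pieces at several $p$-adic orders must be tracked simultaneously with correct signs, indicators, and index shifts; once the correspondence between pieces is laid out carefully, each term's origin is unambiguous and the computation is mechanical.
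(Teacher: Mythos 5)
Your proposal is correct and follows essentially the same route as the paper's proof: expand $\zeta_{p^{n+1}}$ via \Cref{truncatedfinal} (with $n$ replaced by $n+1$) and each $\left(A_{p,n}^{(\beta)}\right)^k$ via \Cref{lem:29041new}, then cancel the main and $\sigma_{n+1}^2$ terms, reindex the $\sigma_{n+1}$-sum, combine the $\beta$- and $H_k$-sums by the same sign identities, and track the $k=3$ term through the indicator $\bbone_3(p)$. Your explicit treatment of the residual $k=p-1$ summand via $\zetax^p=-\zetax$ and Wilson's theorem only spells out what the paper absorbs into ``combining terms,'' so no substantive difference remains.
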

\begin{proof}
    By \Cref{lem:29041new}, the summation $\sum_{k=0}^{p-1}\frac{(-1)^k}{k!}\left(A_{p,n}^{(\beta)}\right)^k$ can be expanded as:
    \begin{equation}\label{Rue}
        \begin{split}
            & \sum_{k=0}^{p-1}\frac{(-1)^k}{k!}\left(A_{p,n}^{(\beta)}\right)^k                                                                                                \\
            = & \sum_{k=0}^{p-1}\frac{(-1)^{(n+1)k}}{k!}\zetax^k p^{\frac{k}{p^n(p-1)}}+\sum_{k=1}^{p-1}\frac{k(-1)^{(n+1)k}}{k!}\zetax^k p^{\frac{k+p-1}{p^n(p-1)}}\sigma_{n+1} \\
            & \quad+\beta\sum_{k=1}^{p-1}\frac{k(-1)^{k+n(k+1)}}{k!}\zetax^{k+1}p^{\frac{k+p}{p^n(p-1)}}                                                                       \\
            & \quad+\frac{1}{2}\zetax^2 p^{\frac{2}{p^{n-1}(p-1)}}\sigma_{n+1}^2-\frac{1-\bbone_3(p)}{6}\cdot 3(-1)^n\zetax^3 p^{\frac{2p^2-p+2}{p^{n+1}(p-1)}}                \\
            & \quad-\beta\zetax^2 p^{\frac{2}{p^{n-1}(p-1)}}\sigma_{n+1}+O\left(p^{\frac{2}{p^{n-1}(p-1)}}\right),
        \end{split}
    \end{equation}
    where $1-\bbone_3(p)$ is the indicator function of the set of primes $p\geq 5$.

    Using the identity $\frac{k(-1)^{k+n(k+1)}}{k!}=-\frac{k(-1)^{(k+1)(n+1)}}{k!}$, we have
    $$\beta\sum_{k=1}^{p-1}\frac{k(-1)^{k+n(k+1)}}{k!}\zetax^{k+1}p^{\frac{k+p}{p^n(p-1)}}=-\sum_{k=1}^{p-1}\frac{k\beta(-1)^{(k+1)(n+1)}}{k!}\zetax^{k+1}p^{\frac{k+p}{p^n(p-1)}}.$$ Together with the shifting of the index in the summation
    $$\sum_{k=1}^{p-1}\frac{k(-1)^{(n+1)k}}{k!}\zetax^k p^{\frac{k+p-1}{p^n(p-1)}}\sigma_{n+1},$$ we can rewrite the summation (\ref{Rue}) as following
    \begin{align*}
          & \sum_{k=0}^{p-1}\frac{(-1)^k}{k!}\left(A_{p,n}^{(\beta)}\right)^k                                                                                                     \\
        = & \sum_{k=0}^{p-1}\frac{(-1)^{(n+1)k}}{k!}\zetax^k p^{\frac{k}{p^n(p-1)}}+\sum_{k=0}^{p-2}\frac{(-1)^{(n+1)(k+1)}}{k!}\zetax^{k+1} p^{\frac{k+p}{p^n(p-1)}}\sigma_{n+1} \\
          & \quad-\sum_{k=1}^{p-1}\frac{k\beta(-1)^{(k+1)(n+1)}}{k!}\zetax^{k+1}p^{\frac{k+p}{p^n(p-1)}}                                                                          \\
          & \quad+\frac{1}{2}\zetax^2 p^{\frac{2}{p^{n-1}(p-1)}}\sigma_{n+1}^2-\frac{(-1)^n(1-\bbone_3(p))}{2}\zetax^3 p^{\frac{2p^2-p+2}{p^{n+1}(p-1)}}                          \\
          & \quad-\beta\zetax^2 p^{\frac{2}{p^{n-1}(p-1)}}\sigma_{n+1}+O\left(p^{\frac{2}{p^{n-1}(p-1)}}\right).
    \end{align*}
    Therefore, by using the truncated expansion of $\zeta_{p^{n+1}}$ (cf. \Cref{truncatedfinal} l.c.) and combining terms, we have
    \begin{align*}
          & \zeta_{p^{n+1}}-\sum_{k=0}^{p-1}\frac{(-1)^k}{k!}\left(A_{p,n}^{(\beta)}\right)^k                                                                                          \\
        = & (-1)^{(n+1)}\zetax p^{\frac{2p-1}{p^n(p-1)}}\sigma_{n+1}+\sum_{k=1}^{p-1}\frac{(-1)^k\left(k\beta-H_k\right)}{k!}(-1)^{nk+n+1}\zetax^{k+1} p^{\frac{k+p}{p^n(p-1)}}        \\
          & \quad+\beta\zetax^2 p^{\frac{2}{p^{n-1}(p-1)}}\sigma_{n+1}-\bbone_3(p)\frac{(-1)^n}{2}\zetax^3 p^{\frac{2p^2-p+2}{p^{n+1}(p-1)}}+O\left(p^{\frac{2}{p^{n-1}(p-1)}}\right).
    \end{align*}
\end{proof}

\subsection{Proof of \Cref{thm:bestuniformizer}}
\begin{proposition}\label{prop:51912}
    For any integer $n\geq 2$, the $\frac{2}{p^n(p-1)}$-truncated expansion of
    \[\left(\zeta_{p^{n+1}}-1\right)^{-2p+2}\left(\zeta_{p^{n+1}}-\sum_{k=0}^{p-1}\frac{(-1)^k}{k!}\left(A_{p,n}^{(\beta)}\right)^k-\sum_{k=1}^{p-1}\frac{(-1)^k\left(k\beta-H_k\right)}{k!}\left(A_{p,n}^{(\beta)}\right)^{p+k}\right)\]
    is given by
    \begin{align*}
        (-1)^{(n+1)}\zetax p^{\frac{1}{p^n(p-1)}}\sigma_{n+1}+2\zetax^2 p^{\frac{2}{p^n(p-1)}}\sigma_{n+1}+O\left(p^{\frac{2}{p^n(p-1)}}\right).
    \end{align*}
\end{proposition}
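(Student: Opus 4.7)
The plan is a three-stage direct computation: first expand the inner factor using Corollary~\ref{coro:5955} and Lemma~\ref{lem:29041new}, then expand $(\zeta_{p^{n+1}}-1)^{-2p+2}$ via Proposition~\ref{lem:fenmu}, and finally multiply and keep only the cross-products of valuation strictly below $\frac{2}{p^n(p-1)}$.

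The crucial combinatorial input is a cancellation. Applying Lemma~\ref{lem:29041new} with index $p+k$ for $1\leq k\leq p-1$, the indicators $\bbone_{\leq p-1}$, $\bbone_1$, $\bbone_2$ and $\bbone_3$ all vanish (since $p+k\geq p+1\geq 4$), and $\bbone_{\leq p+1}(p+k)$ is nontrivial only for $k=1$, producing a single extra $\sigma_{n+1}$-correction. Using $\zetax^{p-1}=-1$ (hence $\zetax^{p+k}=-\zetax^{k+1}$) and, for $p$ odd, $(-1)^{n(p+k)}=(-1)^{n(k+1)}$, the main part $(-1)^{n(p+k)}\zetax^{p+k}p^{\frac{p+k}{p^n(p-1)}}$ equals $-(-1)^{n(k+1)}\zetax^{k+1}p^{\frac{p+k}{p^n(p-1)}}$. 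Combined with the sign identity $(-1)^{nk+n+1}=-(-1)^{n(k+1)}$, this makes the main parts of $\sum_{k=1}^{p-1}\frac{(-1)^k(k\beta-H_k)}{k!}(A_{p,n}^{(\beta)})^{p+k}$ match term by term the highlighted sum appearing in Corollary~\ref{coro:5955}, and the two cancel exactly. What survives is
\[
D = (-1)^{n+1}\zetax p^{\frac{2p-1}{p^n(p-1)}}\sigma_{n+1} + c\cdot\zetax^2 p^{\frac{2p}{p^n(p-1)}}\sigma_{n+1} - \bbone_3(p)\tfrac{(-1)^n}{2}\zetax^3 p^{\frac{2p^2-p+2}{p^{n+1}(p-1)}} + O\bigl(p^{\frac{2}{p^{n-1}(p-1)}}\bigr),
\]
where $c=p(1-\beta)+1$ combines the $\beta\zetax^2 p^{\frac{2}{p^{n-1}(p-1)}}\sigma_{n+1}$ term from Corollary~\ref{coro:5955} with the extra $k=1$ contribution $-(\beta-1)(p+1)\zetax^2 p^{\frac{2p}{p^n(p-1)}}\sigma_{n+1}$ from Lemma~\ref{lem:29041new}, and I have used $\frac{2}{p^{n-1}(p-1)}=\frac{2p}{p^n(p-1)}$.

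Next, multiplying $D$ by $C:=(\zeta_{p^{n+1}}-1)^{-2p+2}=p^{-2/p^n}\bigl(1-(-1)^n\zetax p^{\frac{1}{p^n(p-1)}}-\bbone_3(p)\,p^{\frac{1}{p^n}}\sigma_{n+1}+O(p^{\frac{2}{p^n(p-1)}})\bigr)$ from Proposition~\ref{lem:fenmu}, a direct valuation check restricts the generically visible cross-products (those of valuation $<\frac{2}{p^n(p-1)}$) to three: the $p^{-2/p^n}$-factor against the leading $D$-term yields $(-1)^{n+1}\zetax p^{\frac{1}{p^n(p-1)}}\sigma_{n+1}$; against the second $D$-term it yields $c\,\zetax^2 p^{\frac{2}{p^n(p-1)}}\sigma_{n+1}$; and $p^{-2/p^n}\cdot(-(-1)^n\zetax p^{\frac{1}{p^n(p-1)}})$ against the leading $D$-term yields a further $\zetax^2 p^{\frac{2}{p^n(p-1)}}\sigma_{n+1}$. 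Summing the last two produces coefficient $c+1=p(1-\beta)+2$; since $v_p\bigl(p(1-\beta)\bigr)\geq 1>\frac{1}{p^{n+1}}$, the $p(1-\beta)$-piece is absorbed into $O\bigl(p^{\frac{2}{p^n(p-1)}}\bigr)$ and the effective coefficient collapses to $2$.

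The main obstacle is the case $p=3$, where Proposition~\ref{lem:fenmu} contributes the extra $-p^{\frac{1}{p^n}}\sigma_{n+1}$-term to $C$ and $D$ carries the $\bbone_3(p)$-correction. The cross-product of $-p^{\frac{1}{p^n}}\sigma_{n+1}$ with the leading $D$-term, and the $p^{-2/p^n}$-factor against the $\bbone_3(p)$-term of $D$, \emph{both} land at valuation $\tfrac{5}{3^{n+1}\cdot 2}$, strictly below the $O$-threshold $\tfrac{6}{3^{n+1}\cdot 2}$. The rescue uses $\zeta_4^3=-\zeta_4$ and the leading $p^{-2/3^{n+1}}$-term of $\sigma_{n+1}^2$: the two contributions add with total coefficient $\tfrac{1}{2}+1=\tfrac{3}{2}$, and since $v_3(3)=1$, the combined term has valuation $\geq 1+\tfrac{5}{3^{n+1}\cdot 2}$, which exceeds the $O$-threshold. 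The tail terms of $p^{\frac{1}{p^{n-1}(p-1)}}\sigma_{n+1}^2$ are already above the threshold by a direct check, and all other $p=3$-specific cross-products are routinely above the threshold. This hidden ``factor-of-$3$'' cancellation is the only non-bookkeeping step.
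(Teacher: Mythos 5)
Your proposal is correct and follows essentially the same route as the paper's proof: expand the inner bracket via \Cref{coro:5955} together with \Cref{lem:29041new} applied to the exponents $p+k$ (the main parts cancel the corollary's sum term by term, and only $k=1$ leaves a $\sigma_{n+1}$-correction), then multiply by the expansion of $\left(\zeta_{p^{n+1}}-1\right)^{-2p+2}$ from \Cref{lem:fenmu} and discard everything at or above valuation $\tfrac{2}{p^n(p-1)}$. The one place you genuinely diverge is the $p=3$ endgame: the paper displays the two contributions at valuation $\tfrac{5}{2\cdot 3^{n+1}}$ as cancelling exactly (its listed extra terms carry coefficients $(-1)^n\zeta_4$ and $-(-1)^n\zeta_4$), whereas you obtain coefficients $(-1)^n\zeta_4$ and $\tfrac{(-1)^n}{2}\zeta_4$, summing to $\tfrac{3}{2}(-1)^n\zeta_4$, and kill the term by the $3$-divisibility of $\tfrac32$. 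Recomputing the contribution of $-\bbone_3(p)\tfrac{(-1)^n}{2}\zetax^3 p^{\frac{2p^2-p+2}{p^{n+1}(p-1)}}$ with $\zeta_4^3=-\zeta_4$ supports your coefficient $\tfrac12$, so your divisibility argument is the accurate justification of this step; either way the term lands in $O\left(p^{\frac{2}{p^n(p-1)}}\right)$ and the stated expansion follows. Your carrying of the full coefficient $c=p(1-\beta)+1$ (rather than absorbing $p(1-\beta)$ into the error immediately, as the paper effectively does) is equivalent modulo the truncation, and your valuation checks ruling out all other cross-products, including the $p=3$-specific ones, are complete.
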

\begin{proof}
    For $k=1,\cdots,p-1$, by \Cref{lem:29041new} we have
    \begin{align*}
        \left(A_{p,n}^{(\beta)}\right)^{p+k}=  (-1)^{nk+n+1}\zetax^{k+1}p^{\frac{k+p}{p^n(p-1)}}-\bbone_1(k)\cdot \zetax^2 p^{\frac{2}{p^{n-1}(p-1)}}\sigma_{n+1}.
    \end{align*}
    Therefore, together with \Cref{coro:5955}, one calculates
    \begin{align*}
          & \zeta_{p^{n+1}}-\sum_{k=0}^{p-1}\frac{(-1)^k}{k!}\left(A_{p,n}^{(\beta)}\right)^k-\sum_{k=1}^{p-1}\frac{(-1)^k\left(k\beta-H_k\right)}{k!}\left(A_{p,n}^{(\beta)}\right)^{p+k} \\
        = & (-1)^{(n+1)}\zetax p^{\frac{2p-1}{p^n(p-1)}}\sigma_{n+1}+\zetax^2 p^{\frac{2}{p^{n-1}(p-1)}}\sigma_{n+1}                                                                       \\
          & \quad-\bbone_3(p)\cdot\frac{(-1)^n}{2}\zetax^3 p^{\frac{2p^2-p+2}{p^{n+1}(p-1)}}+O\left(p^{\frac{2}{p^{n-1}(p-1)}}\right).
    \end{align*}
    Together with the identity (cf. \Cref{lem:fenmu} l.c.)
    \begin{align*}
          & \left(\zeta_{p^{n+1}}-1\right)^{-2p+2}                                                                                                                           \\
        = & p^{-\frac{2}{p^n}}\left(1-(-1)^n\zetax p^{\frac{1}{p^n(p-1)}}-\bbone_3(p)\cdot 3^{\frac{2}{3^n\cdot 2}}\sigma_{n+1}+O\left(p^{\frac{2}{p^n(p-1)}}\right)\right),
    \end{align*}
    we calculate by expanding the product and combining terms that
    \begin{align*}
          & \left(\zeta_{p^{n+1}}-1\right)^{-2p+2}                                                                                                                                                                \\
          & \quad\cdot\left(\zeta_{p^{n+1}}-\sum_{k=0}^{p-1}\frac{(-1)^k}{k!}\left(A_{p,n}^{(\beta)}\right)^k-\sum_{k=1}^{p-1}\frac{(-1)^k\left(k\beta-H_k\right)}{k!}\left(A_{p,n}^{(\beta)}\right)^{p+k}\right) \\
        = & (-1)^{(n+1)}\zetax p^{\frac{1}{p^n(p-1)}}\sigma_{n+1}+2\zetax^2 p^{\frac{2}{p^n(p-1)}}\sigma_{n+1}+O\left(p^{\frac{2}{p^n(p-1)}}\right)                                                               \\
          & \quad +\bbone_3(p)\cdot\left((-1)^n\zeta_4 3^{\frac{3/2}{3^n}}\cdot\sigma_{n+1}^2-(-1)^n\zeta_4 3^{\frac{5/6}{3^n}}\right.                                                                            \\
          & \phantom{\quad +\bbone_3(p)\cdot\left(\quad\quad\right.}\left.+3^{\frac{2}{3^n}}\cdot\sigma_{n+1}^2+(-1)^n\zeta_4 3^{\frac{11/6}{3^n}}\sigma_{n+1}-3^{\frac{4/3}{3^n}}\right).
    \end{align*}
    For $p\geq 5$, $\bbone_3(p)=0$ and we get the desired result in this case. For $p=3$, the result follows from the following calculation of extra terms:
    \begin{align*}
          & (-1)^n\zeta_4 3^{\frac{3/2}{3^n}}\cdot\sigma_{n+1}^2-(-1)^n\zeta_4 3^{\frac{5/6}{3^n}}+3^{\frac{2}{3^n}}\cdot\sigma_{n+1}^2+(-1)^n\zeta_4 3^{\frac{11/6}{3^n}}-3^{\frac{4/3}{3^n}} \\
        = & (-1)^n\zeta_4 3^{\frac{3/2}{3^n}}\cdot\left(3^{-\frac{2/3}{3^n}}+O\left(3^{-\frac{4/9}{3^n}}\right)\right)-(-1)^n\zeta_4 3^{\frac{5/6}{3^n}}                                       \\
          & \quad +3^{\frac{2}{3^n}}\cdot O\left(3^{-\frac{2/3}{3^n}}\right)+O\left(3^{\frac{1}{3^n}}\right)                                                                                   \\
        = & O\left(3^{\frac{1}{3^n}}\right).
    \end{align*}
\end{proof}

Recall that one has
\[{\pi_p^{2,1}}=p^{-1/p}\left(\zeta_{p^2}-\sum_{k=0}^{p-1}\frac{1}{[k!]}\zetax^k p^{\frac{k}{p(p-1)}}\right).\]
We deduce
the $\left(\frac{2}{p(p-1)}\right)$-truncated expansions of powers of ${\pi_p^{2,1}}$ from \Cref{truncatedfinal}:
\[\pi_{p}^{2,1}=\zeta_{2(p-1)}p^{
    \frac{1}{p(p-1)}}\sigma_2+\zetax^2 p^{\frac{2}{p(p-1)}}\sigma_2+O\left(p^{\frac{2}{p(p-1)}}\right).\]
Take $A_{p,2}^{(1)}=\pi_p^{2,1}$, by \Cref{prop:51912} we know that
\begin{align*}
    \pi_p^{3,1}=-\zetax p^{\frac{1}{p^2(p-1)}}\sigma_3+2\zetax^2 p^{\frac{2}{p^2(p-1)}}\sigma_3+O\left(p^{\frac{2}{p^2(p-1)}}\right).
\end{align*}
Assume we have proved that
\[\pi_p^{n,1}=(-1)^n\zetax p^{\frac{1}{p^{n-1}(p-1)}}\sigma_n+2\zetax^2 p^{\frac{2}{p^{n-1}(p-1)}}\sigma_n+O\left(p^{\frac{2}{p^{n-1}(p-1)}}\right),\]
then by taking $A_{p,n}^{(2)}=\pi_p^{n,1}$, one can deduce from \Cref{prop:51912} that
\[\pi_p^{n+1,1}=(-1)^{(n+1)}\zetax p^{\frac{1}{p^n(p-1)}}\sigma_{n+1}+2\zetax^2 p^{\frac{2}{p^n(p-1)}}\sigma_{n+1}+O\left(p^{\frac{2}{p^n(p-1)}}\right).\]
Therefore, for every $m\geq 3$, we obtain
\[v_p\left(\pi_p^{m,1}\right)=\frac{1}{p^{m-1}(p-1)}-\frac{1}{p^m}=\frac{1}{p^m(p-1)}=e_{\bbK_p^{m,1}/\bbQ_p}^{-1}.\]
Thus, we can conclude that
$\pi_p^{m,1}$ is a uniformizer of $\bbK_p^{m,1}$ for every $m\geq 3$.

\section{MacLane's pseudo-valuations}
In this section, unless otherwise specified, $K$ will be a finite extension of ${\bbQ_p}$ with uniformizer $\pi_K$, residue field $\kappa_K$, value group $\Gamma_K$ and the normalized $p$-adic valuation $v_p$ (i.e. $v_p(p)=1$). We recall that MacLane's pseudo-valuations are defined as following:
\begin{definition}
    A map $v$ from the $K$-algebra $K[T]$ to $\bbR\cup\{\infty\}$ is called a \textbf{pseudo-valuation} if the following conditions are satisfied: for all $f,g\in K[T]$,

    \noindent\begin{enumerate*}
        \item $v(T)\geq 0$;
        \item $v(f+g)\geq \min(v(f),v(g))$;
        \item $v(f\cdot g)=v(f)+v(g)$;
        \item $v\vert_K=v_p$.
    \end{enumerate*}
\end{definition}
Let $V(K[T])$ be the set of all pseudo-valuations on $K[T]$. For any $v\in V(K[T])$, we define the value group $\Gamma_v$ of $v$ to be the Grothendieck group of the monoid $v(K[T])\backslash\{\infty\}$.

\subsection{Pseudo-valuations and Berkovich unit disc}
Recall that the Berkovich unit disc $\DBerk$ on $K$ is the set of all bounded multiplicative semi-norms on the $K$-Tate algebra
$$K\langle T\rangle\coloneqq\left\{\sum_{k=0}^\infty a_k T^k\in K\llbracket T\rrbracket\middle\vert\lim_{k\to\infty} v_p(a_k)=+\infty\right\}.$$

\begin{definition}[{\cite[B.5]{baker_potential_2010}}]
    Let $(T,\leq)$ be a partially ordered set. We call $(T,\leq)$ a \textbf{parametrized rooted tree} if there exists a function $\alpha:T\to\bbR_{\geq 0}$, satisfying the following axioms:
    \begin{enumerate}[label={(P\arabic*)}]
        \item $T$ has a unique maximal element $\zeta$, called the root of $T$.
        \item For each $x\in T$, the set $\{z\in T\vert z\geq x\}$ is totally ordered.
        \item $\alpha(\zeta)=0$.
        \item $\alpha$ is order-reversing, in the sense that $x\leq y$ implies $\alpha(x)\geq \alpha(y)$.
        \item The restriction of $\alpha$ to any full totally ordered subset of $T$ gives a bijection onto a real interval. (A totally ordered subset $S$ of $T$ is called full if $x,y\in S,z\in T$, and $x\leq z\leq y$ implies $z\in S$.)
    \end{enumerate}
\end{definition}
\begin{remark}
    There is a more geometric notion which is equivalence to the parametrized rooted tree, called $\bbR$-tree (cf. \cite[Section 1.4]{baker_potential_2010}). For our purpose, the description of parametrized rooted tree is more convenient.
\end{remark}

There is a partial order $``\leq" $ on $\DBerk$ defined as following:
Let $\norm{\cdot}_x,\norm{\cdot}_y\in\DBerk$. We say $\norm{\cdot}_x\leq \norm{\cdot}_y$, if $\norm{f}_x\leq\norm{f}_y$ for every $f\in K\langle T\rangle$.
\begin{theorem}[{cf. \cite[Section 1.4]{baker_potential_2010}}]
    The partially ordered set $(\DBerk,\leq)$ is a parametrized rooted tree with its root: the Gauss norm
    $$\norm{\cdot}_\frakG:\sum_{k=0}^\infty a_k T^k\mapsto \max_{k\geq 0} \left(p^{-v_p(a_k)}\right).$$
\end{theorem}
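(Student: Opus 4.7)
The plan is to verify the five axioms (P1)--(P5) by equipping $\DBerk$ with a parametrization based on the \emph{diameter} of points and invoking the Berkovich classification of points of the unit disc. For each $x \in \DBerk$ I set
\[\mathrm{diam}(x) = \inf\bigl\{r \in [0,1] : \exists\, a \in \calO_K,\ \norm{\cdot}_x \leq \norm{\cdot}_{D(a,r)}\bigr\},\]
where $\norm{\cdot}_{D(a,r)}$ denotes the sup-norm on the closed disk of radius $r$ centered at $a$, and define $\alpha(x) = 1 - \mathrm{diam}(x) \in [0,1]$. Since $\mathrm{diam}(\frakG) = 1$, axiom (P3) holds.

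For (P1), any $\norm{\cdot}_x \in \DBerk$ satisfies $\norm{T}_x \leq 1$ by the definition of the unit disc, and $\norm{\cdot}_x$ restricts to $|\cdot|_p$ on $K$; hence by multiplicativity and the ultrametric inequality we obtain $\norm{\sum a_k T^k}_x \leq \max_k |a_k|_p = \norm{\sum a_k T^k}_\frakG$, which shows $\frakG$ is the unique maximum. Axiom (P4) is then immediate from the definition of $\mathrm{diam}$: if $x \leq y$, every disk sup-norm dominating $y$ also dominates $x$, so $\mathrm{diam}(x) \leq \mathrm{diam}(y)$ and $\alpha(x) \geq \alpha(y)$.

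The heart of the argument is (P2). I would invoke the Berkovich classification of points of $\DBerk$ into four types: (I) classical points $a \in \calO_{\bbC_p}$ with $\norm{f}_a = |f(a)|_p$; (II) sup-norms $\norm{\cdot}_{D(a,r)}$ with $r \in |\bbC_p^*|$; (III) sup-norms $\norm{\cdot}_{D(a,r)}$ with $r \notin |\bbC_p^*|$; and (IV) seminorms arising from nested sequences of closed disks with empty intersection. In each case, the set $\{z : z \geq x\}$ is in natural bijection with the chain of closed disks containing the defining data of $x$; the ultrametric property on $\bbC_p$ forces any two such disks to be comparable by inclusion, yielding the total order on $\{z \geq x\}$. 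Axiom (P5) then follows because any full totally ordered subset corresponds to a ``segment'' of a path in the tree, along which $\mathrm{diam}$, and hence $\alpha$, is a continuous monotone bijection onto a closed subinterval of $[0,1]$.

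The main obstacle is (P2), whose rigorous proof requires the Berkovich classification theorem together with a careful description of the topology of $\DBerk$. Since this is substantial machinery developed in Berkovich's spectral theory and Chapter 1 of \cite{baker_potential_2010}, my approach would be to cite those references for the classification rather than reprove it from scratch, and concentrate only on the diameter-based verification of the five axioms.
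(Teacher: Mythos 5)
The paper itself does not prove this statement: it is quoted with a ``cf.'' citation to Baker--Rumely, Section 1.4, so there is no internal proof to compare against, and your plan (diameter parametrization plus the classification of points) is exactly the argument of that source. On its own terms, your verification of (P1), (P3), (P4) is fine.

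There is, however, a genuine gap in the way you set up the parametrization: the paper's $\DBerk$ is the Berkovich disc over $K$, a finite extension of $\bbQ_p$, which is \emph{not} algebraically closed, while the diameter function and the four-type classification via disks $D(a,r)$ that you invoke are the ones valid over $\bbC_p$. Your definition of $\mathrm{diam}(x)$ as an infimum over disks with centers $a\in\calO_K$ does not work over $K$: take the Type I point $x\colon f\mapsto \lvert f(p^{1/2})\rvert_p$ (a legitimate bounded multiplicative seminorm on $K\langle T\rangle$ when $p^{1/2}\notin K$). Since $v_p(p^{1/2}-b)\leq \tfrac12$ for every $b\in K$, the smallest $K$-centered disk norm dominating $x$ has radius $p^{-1/2}$, so your $\alpha$ is constant, equal to $1-p^{-1/2}$, along the whole nontrivial segment of points between $x$ and the sup-norm of $D(p^{1/2},p^{-1/2})$; this destroys the bijectivity required in (P5). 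Likewise the classification you quote in (P2) (``classical points $a\in\calO_{\bbC_p}$, sup-norms of disks, nested sequences'') describes points of the disc over $\bbC_p$, not over $K$, where points are rather Galois orbits (equivalently, described by the invariants $\frake_x,\frakf_x$ as in the paper). The standard repair is to base-change: use the surjection from the Berkovich disc over $\bbC_p$ onto $\DBerk$ induced by restriction of seminorms, define the diameter upstairs with centers in $\calO_{\bbC_p}$ (it is Galois-invariant, hence descends), and deduce (P2) and (P5) for $\DBerk$ from the corresponding statements over $\bbC_p$; alternatively, argue directly over $K$ with the type classification the paper actually uses. As written, your argument proves the theorem only for $K$ algebraically closed, which is not the paper's setting.
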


For $\norm{\cdot}_x\in\DBerk$, let $\underline{x}$ be the norm induced by $\norm{\cdot}_x$ on the field $\calH(x)\coloneqq\mathrm{Frac}(K[T]/\ker \norm{\cdot}_x)$. Denote by $\kappa_{\underline{x}}\coloneqq \underline{x}^{-1}\left(\left[0,1\right]\right)/\underline{x}^{-1}\left(\left[0,1\right)\right)$ the residue field of $\underline{x}$ and by $\Gamma_{\underline{x}}$ the value group of $\underline{x}$. Define $\frake_x\coloneqq\dim_\bbQ(\Gamma_{\underline{x}}/\Gamma_K\otimes \bbQ)$ and $\frakf_x\coloneqq\mathrm{tr}\deg( \kappa_{\underline{x}}/\kappa_K)$. Points in $\DBerk$ can be classified using the parameters $\frake_x$, $\frakf_x$ and $\calH(x)$ (cf. \cite[Definition 2.3.3.3]{ducros_introduction_2015}):
\begin{enumerate}    \item The point $x$ is of Type I, if $\calH(x)\subseteq\bbC_p$;
    \item The point $x$ is of Type II, if we have $\frake_x=0$ and $\frakf_x=1$;
    \item The point $x$ is of Type III, if we have $\frake_x=1$ and $\frakf_x=0$;
    \item The point $x$ is of Type IV, if $\frake_x=\frakf_x=0$ and $x$ is not of Type I.
\end{enumerate}

\begin{proposition}\label{thm:corres}
    The points in $V(K[T])$ are in one-to-one correspondence to the points in the Berkovich unit disc $\DBerk$.
\end{proposition}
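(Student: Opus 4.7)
The plan is to construct explicit mutual inverses between $V(K[T])$ and $\DBerk$. For $v\in V(K[T])$, I set $\Phi(v)(f)=p^{-v(f)}$ for $f\in K[T]$ (with the convention $p^{-\infty}=0$): the four pseudo-valuation axioms translate one-for-one into the multiplicative ultrametric semi-norm axioms plus the requirement that $\Phi(v)$ restricts to the usual $p$-adic absolute value on $K$. Crucially, $v(T)\geq 0$ forces $\Phi(v)(T)\leq 1$, so by multiplicativity and the ultrametric inequality one obtains $\Phi(v)(f)\leq \norm{f}_\frakG$ for every $f\in K[T]$. Going the other way, for $\norm{\cdot}_x\in\DBerk$ I set $\Psi(\norm{\cdot}_x)(f)=-\log_p\norm{f}_x$ on $K[T]$; boundedness and multiplicativity of $\norm{\cdot}_x$ imply $\norm{T}_x\leq 1$ (otherwise $\norm{T^n}_x=\norm{T}_x^n$ would be unbounded), and the remaining pseudo-valuation axioms are inherited directly from those of $\norm{\cdot}_x$.

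The second step is to extend $\Phi(v)$ from $K[T]$ to the whole Tate algebra. Since $K\langle T\rangle$ is by definition the completion of $K[T]$ under the Gauss norm and $\Phi(v)\leq \norm{\cdot}_\frakG$ on polynomials, the reverse triangle inequality $|\Phi(v)(f)-\Phi(v)(g)|\leq \Phi(v)(f-g)\leq \norm{f-g}_\frakG$ makes $\Phi(v)$ one-Lipschitz, hence uniformly continuous in the Gauss-norm topology, and so it extends uniquely to a continuous map $K\langle T\rangle\to \bbR_{\geq 0}$. The ultrametric inequality and the bound by $\norm{\cdot}_\frakG$ transfer to the limit automatically; multiplicativity transfers by the standard density argument on Cauchy sequences. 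This gives the desired bounded multiplicative semi-norm, and by construction $\Psi\circ\Phi=\mathrm{id}$ on $V(K[T])$, while uniqueness of the continuous extension from the dense subring $K[T]$ yields $\Phi\circ\Psi=\mathrm{id}$ on $\DBerk$.

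I expect the most delicate point to be ensuring that multiplicativity genuinely survives the passage from $K[T]$ to $K\langle T\rangle$, especially for Type I points where $\Phi(v)$ vanishes on a nontrivial ideal: one needs $\ker\Phi(v)$ to cut out a prime ideal of $K\langle T\rangle$, and the induced quotient norm to remain multiplicative. The ultrametric and $K$-compatibility axioms are linear and thus behave well under limits, but the product of two Cauchy sequences must itself be handled with a uniform bound in order to keep $\Phi(v)(fg)=\Phi(v)(f)\Phi(v)(g)$ in the limit. Once this multiplicativity is secured, the remaining bookkeeping --- verifying that the two constructions are honestly inverse, and that $\Phi(v)$ lands in $\DBerk$ rather than merely the set of bounded semi-norms on $K[T]$ --- is automatic.
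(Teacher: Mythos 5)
Your proposal is correct and follows essentially the same route as the paper: both directions are given by $f\mapsto p^{-v(f)}$ and $f\mapsto-\log_p\norm{f}_x$, with the seminorm extended from $K[T]$ to $K\langle T\rangle$ using the bound by the Gauss norm (your Lipschitz/density extension is the same mechanism as the paper's limit over partial sums), and the two maps checked to be mutually inverse. Your extra care about multiplicativity surviving the limit is a reasonable elaboration of what the paper leaves as ``one can verify,'' not a departure in method.
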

\begin{proof}
    The restriction of an element $\norm{\cdot}_x\in\DBerk$ to $K[T]$ induces a pseudo-valuation on $K[T]$:
    $$v_x: K[T]\to\bbR\cup\{\infty\},\ f\mapsto -\log_p(\norm{f}_x).$$
    Conversely, let $v_y\in V(K[T])$ be a pseudo-valuation. It gives a multiplicative semi-norm on $K[T]$:
    $$\norm{\cdot}_y: K[T]\to\bbR\cup\{\infty\},\ f\mapsto p^{-v_y(f)}.$$
    Given $f=\sum_{k=0}^\infty a_k T^k\in K\langle T\rangle$, we set
    $$\norm{f}_y\coloneqq \lim_{n\to\infty}\norm*{\sum_{k=0}^n a_k T^k}_y.$$
    This limit converges by the strong triangle inequality. One can verify that this gives a bounded multiplicative semi-norm on $K\langle T\rangle$, i.e. an element in $\DBerk$.

    It is immediate to check that the maps
    $$V(K[T])\to \DBerk,\ v_y\mapsto \norm{\cdot}_y$$
    and
    $$\DBerk\to V(K[T]),\ \norm{\cdot}_x\mapsto v_x$$
    are inverse to each other.
\end{proof}

Using this bijection, $V(K[T])$ is endowed with a partial order induced by that on $\DBerk$. Moreover, the type classification of points on $\DBerk$ induces a type classification of points on $V(K[T])$.

\begin{example}[{pseudo-valuations on $K[T]$}]
    \begin{enumerate}
        \item The \textbf{Gauss valuation} $$v_\frakG:K[T]\to \bbR\cup\{0\}, \sum_{i=0}^n a_k t^k\mapsto \min_{0\leq i\leq n}v_p(a_k)$$
              is a pseudo-valuation of Type II.
        \item For an irreducible polynomial $G(T)\in K[T]$, the $p$-adic valuation on $K$ extends to the $p$-adic valuation $v_L=v_p$ on $L\coloneqq K[T]/(G)$. Then
              $$v:K[T]\to\bbR\cup\{\infty\},\ f\mapsto v_L(\bar{f})$$
              is a pseudo-valuation of Type I with nontrivial kernel.
    \end{enumerate}
\end{example}

\subsection{Augmentation and inductive pseudo-valuations}

MacLane has a method to augment a pseudo-valuation in $(V(K[T]), \leq )$, which is based on  a special class of irreducible polynomials in $K[T]$, called \textbf{ key polynomials}. To define the key polynomials, we need the following notions:
\begin{definition}[{cf. \cite[I.2, Definition 4.1]{MacLane1936a}}]\label{def:3781}
    Let $v\in V(K[T])$ and $f,g\in K[T]$.
    \begin{enumerate}
        \item Say $f$ and $g$ are \textbf{$v$-equivalent}, which is denoted by $f\sim_v g$, if $v(f-g)>v(f)$ or $f=g=0$.
        \item Say $f$ is \textbf{$v$-divisible} by $g$, which is denoted by $g\vert_v f$, if there exists $q\in K[T]$ such that $f\sim_v qg$.
        \item A polynomial $\phi\in K[T]$ is \textbf{$v$-irreducible} if for any $f,g\in K[T]$, we have $\phi \vert_v fg\Rightarrow \phi \vert_v f$ or $\phi\vert_v g$.
        \item A non-constant polynomial $\phi\in K[T]$ is \textbf{$v$-minimal} if for every $f\in K[T]\backslash\{0\}$, we have $\phi\vert_v f\Rightarrow \deg(\phi)\leq \deg(f)$.
    \end{enumerate}
\end{definition}
Now we can define the key polynomial over a pseudo-valuation $v$ as following:
\begin{definition}[{cf. \cite[Definition 4.1]{MacLane1936a}}]\label{def:17617}
    A $v$-irreducible and $v$-minimal monic polynomial $\phi\in K[T]$ is called a \textbf{key polynomial} over $v$.
\end{definition}

Given a pseudo-valuation $v\in V(K[T])$, a key polynomial $\phi\in K[T]$ over $v$ and a key value $\lambda\in \bbR\cup\{\infty\}$ with $\lambda>v(\phi)$, MacLane defines the \textbf{augmentation} of $v$ associated to $\phi$ and $\lambda$ as following:
$$w:K[T]\to\bbR\cup\{\infty\},\ f\mapsto \min_{0\leq i\leq m}v(a_i)+i\lambda,$$
where $f=\sum_{i=0}^m a_i\phi^i$ is the $\phi$-adic expansion of $f$. This is a pseudo-valuation which we denoted by $w=[v,w(\phi)=\lambda]$.

\begin{definition}[{cf. \cite[Definition 6.1]{MacLane1936a}}]
    For a pseudo-valuation $v\in V(K[T])$, if there exists a sequence of pseudo-valuations $v_0=v_\frakG,v_1,\cdots,v_k=v$ satisfying:
    \begin{enumerate}
        \item $v_i=[v_{i-1},v_i(\phi_i)=\lambda_i]$ for $i=1,\cdots k$;
        \item $\deg(\phi_{i+1})\geq \deg(\phi_i)$ for $i=1,\cdots,k-1$;
        \item $\phi_{i+1}\nsim_{v_i}\phi_i$ for $i=1,\cdots,k-1$,
    \end{enumerate}
    then we say that $v$ has a $k$-th \textbf{(inductive) representation}
    $$\rho_v=[v_\frakG,v_1(\phi_1)=\lambda_1,\cdots,v_k(\phi_k)=\lambda_k]$$
    with key polynomials $\phi_1,\cdots,\phi_k$ and key values $\lambda_1,\cdots,\lambda_k$.
\end{definition}

If a pseudo-valuation $v$ has an inductive representation $\rho_v$, then we say that it is an \textbf{inductive pseudo-valuation} (represented by $\rho_v$) or it is inductive in short.
The representation of an inductive pseudo-valuation is not necessarily unique. This can be observed by the following fact:
\begin{lemma}[{cf. \cite[Lemma 15.1]{MacLane1936a}}]\label{lem:52781}
    If $$[v_\frakG,v_1(\phi_1)=\lambda_1,\cdots,v_k(\phi_k)=\lambda_k]$$ is a representation of an inductive pseudo-valuation $v$ with $\deg\phi_{k-1}=\deg\phi_k$, then
    $$[v_\frakG,v_1(\phi_1)=\lambda_1,\cdots,v_{k-2}(\phi_{k-2})=\lambda_{k-2},v_k(\phi_k)=\lambda_k]$$
    is also a representation of $v$.
\end{lemma}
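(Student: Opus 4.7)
The plan is to verify that the shorter sequence defines the same pseudo-valuation $v_k$ and satisfies MacLane's three conditions for an inductive representation. Concretely, I shall show: (i) $\phi_k$ is a key polynomial over $v_{k-2}$ with $\lambda_k > v_{k-2}(\phi_k)$; (ii) the augmentation $[v_{k-2}, w(\phi_k) = \lambda_k]$ coincides with $v_k$ on all of $K[T]$; and (iii) the degree and non-equivalence conditions hold for the shortened sequence.

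The backbone of the argument is the following core observation: $v_{k-1}$ and $v_{k-2}$ agree on every $f \in K[T]$ with $\deg f < \deg \phi_{k-1}$. Indeed, such an $f$ has trivial $\phi_{k-1}$-adic expansion $f = a_0$, so $v_{k-1}(f) = v_{k-2}(a_0) = v_{k-2}(f)$. From this, the equality (ii) is immediate: writing $f = \sum_i a_i \phi_k^i$ with $\deg a_i < \deg \phi_k = \deg \phi_{k-1}$, both augmentations evaluate to $\min_i\bigl(v_{k-2}(a_i) + i\lambda_k\bigr)$ since $v_{k-1}(a_i) = v_{k-2}(a_i)$ on each coefficient. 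Moreover, augmentation is monotone ($v_{k-1} \geq v_{k-2}$ follows at once from the augmentation formula together with $\lambda_{k-1} > v_{k-2}(\phi_{k-1})$), so the inequality $\lambda_k > v_{k-1}(\phi_k) \geq v_{k-2}(\phi_k)$ guarantees that the augmentation is well-defined.

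The heart of the matter is (i), which is the main obstacle. To establish $v_{k-2}$-minimality of $\phi_k$, I argue by contradiction: suppose some nonzero $f$ of degree less than $\deg \phi_k$ satisfies $\phi_k \mid_{v_{k-2}} f$, so $v_{k-2}(f - q\phi_k) > v_{k-2}(f)$ for some $q \in K[T]$. Expanding $q\phi_k$ in its $\phi_{k-1}$-adic form and applying the core observation to each coefficient, this divisibility is promoted to $\phi_k \mid_{v_{k-1}} f$, contradicting the $v_{k-1}$-minimality of $\phi_k$ as a key polynomial over $v_{k-1}$. The $v_{k-2}$-irreducibility follows either by a parallel contradiction argument or, more cleanly, by invoking MacLane's characterization of key polynomials as monic polynomials whose image in the graded ring $\mathrm{gr}_{v_{k-2}} K[T]$ is irreducible. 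Finally, for (iii): the degree inequality $\deg \phi_k \geq \deg \phi_{k-2}$ is inherited directly from $\deg \phi_k = \deg \phi_{k-1} \geq \deg \phi_{k-2}$; and the non-equivalence $\phi_k \nsim_{v_{k-2}} \phi_{k-2}$ will be derived by translating the hypothesis $\phi_k \nsim_{v_{k-1}} \phi_{k-1}$ into the numerical inequality $v_{k-2}(\phi_k - \phi_{k-1}) \leq \lambda_{k-1}$ (using the explicit formula for $v_{k-1}(\phi_k)$ when $\deg \phi_k = \deg \phi_{k-1}$) and combining it with the given $\phi_{k-1} \nsim_{v_{k-2}} \phi_{k-2}$ via the strong triangle inequality applied to $\phi_k - \phi_{k-2} = (\phi_k - \phi_{k-1}) + (\phi_{k-1} - \phi_{k-2})$.
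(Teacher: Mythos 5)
The paper does not prove this lemma itself (it is quoted from \cite[Lemma 15.1]{MacLane1936a}), so your argument has to stand on its own. The routine parts of your plan are fine: the ``core observation'' that $v_{k-1}$ and $v_{k-2}$ agree on polynomials of degree $<\deg\phi_{k-1}$, the monotonicity $v_{k-1}\geq v_{k-2}$ (hence $\lambda_k>v_{k-1}(\phi_k)\geq v_{k-2}(\phi_k)$), the coincidence of the two augmentations on $\phi_k$-adic expansions, and the promotion of $v_{k-2}$-minimality: if $\deg f<\deg\phi_k$ and $v_{k-2}(f-q\phi_k)>v_{k-2}(f)$, then $v_{k-1}(f-q\phi_k)\geq v_{k-2}(f-q\phi_k)>v_{k-2}(f)=v_{k-1}(f)$, contradicting $v_{k-1}$-minimality. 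All of that is correct.

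The two genuinely delicate points, however, are not established. For $v_{k-2}$-irreducibility, the ``parallel contradiction argument'' breaks down: a product $fg$ has degree $\geq\deg\phi_{k-1}$ in general, so $v_{k-1}(fg)$ may be strictly larger than $v_{k-2}(fg)$, and $\phi_k\mid_{v_{k-2}}fg$ neither lifts to nor descends from $\phi_k\mid_{v_{k-1}}fg$ by the core observation; invoking the graded-ring characterization of key polynomials only renames the task, since you would still have to prove that the initial form of $\phi_k$ in $\mathrm{gr}_{v_{k-2}}K[T]$ is prime, which is the whole content. For the non-equivalence $\phi_k\nsim_{v_{k-2}}\phi_{k-2}$, the hypothesis only gives you the \emph{upper} bound $v_{k-2}(\phi_k-\phi_{k-1})\leq\lambda_{k-1}$, and the ultrametric inequality bounds $v_{k-2}(\phi_k-\phi_{k-2})$ from \emph{below}; when $v_{k-2}(\phi_k-\phi_{k-1})=v_{k-2}(\phi_{k-1}-\phi_{k-2})$ cancellation can raise the value and your argument says nothing. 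The missing idea, which repairs both points at once, is to exploit $v_{k-1}$-minimality once more: writing $\phi_k=\phi_{k-1}+h$ with $\deg h<\deg\phi_{k-1}$, if $v_{k-2}(h)=v_{k-1}(h)<\lambda_{k-1}$ then $\phi_k\sim_{v_{k-1}}h$, so $\phi_k\mid_{v_{k-1}}h$ with $0\neq h$ of smaller degree, a contradiction; hence $v_{k-2}(\phi_k-\phi_{k-1})\geq\lambda_{k-1}>v_{k-2}(\phi_{k-1})$, i.e. $\phi_k\sim_{v_{k-2}}\phi_{k-1}$. Since $v_{k-2}$-equivalent monic polynomials of the same degree have identical $v_{k-2}$-divisibility relations, $\phi_k$ inherits $v_{k-2}$-irreducibility and $v_{k-2}$-minimality from the key polynomial $\phi_{k-1}$, and $\phi_k\nsim_{v_{k-2}}\phi_{k-2}$ follows from $\phi_{k-1}\nsim_{v_{k-2}}\phi_{k-2}$ by transitivity of $\sim_{v_{k-2}}$. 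This equivalence $\phi_k\sim_{v_{k-2}}\phi_{k-1}$ is the heart of MacLane's own proof, and without it your proposal has a genuine gap.
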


Among the representations of an inductive valuation, there is a special representation, called homogeneous representation, defined as following:
\begin{definition}[{cf. \cite[Section 16]{MacLane1936a}}]
    Let \[\rho_v=[v_\frakG,v_1(\phi_1)=\lambda_1,\cdots,v_k(\phi_k)=\lambda_k]\]be a $k$-th representation of an inductive valuation $v$ in $V(K[T])$. Call $\rho_v$ a \textbf{homogeneous (inductive) representation}, if for every $i=1,\cdots,k$, the key polynomial $\phi_i$ can be written as\footnote{We define $\phi_0=T$.}
    $$\phi_i=\sum_{j} c_j\cdot\phi_0^{m_0^{(j)}}\cdots\phi_{i-1}^{m_{i-1}^{(j)}},$$
    satisfying
    \begin{enumerate}
        \item $c_j\in \left\{[u]\cdot\pi_K^t\middle\vert u\in \kappa_K, t\geq 0\right\}$ for all $j$, where $[u]$ is the Teichm\"{u}ller lift of $u$;
        \item if $i>1$, then $m_l^{(j)}<\deg(\phi_{l+1})/\deg(\phi_l)$ for $j,l=0,\cdots,i-2$;
        \item $v_{i-1}\left(c_j\cdot\phi_0^{m_0^{(j)}}\cdots\phi_{i-1}^{m_{i-1}^{(j)}}\right)=v_{i-1}(\phi_{i})$ for all $j$.
    \end{enumerate}
\end{definition}
By \cite[Theorem 16.3, Theorem 16.4]{MacLane1936a}, we know that:
\begin{enumerate}
    \item Every inductive pseudo-valuation $v\in V(K[T])$ has a unique homogeneous representation $\rho_v^{\mathsf{h}}$.
    \item $v=w$ if and only if $\rho_v^{\mathsf{h}}$ and $\rho_w^{\mathsf{h}}$ are identical, i.e. they share the same length, key polynomials and key values.
\end{enumerate}

The following result characterizes the type classification in terms of inductive valuations, and shows that Type IV points are not inductive.

\begin{proposition}[{cf. \cite[Corollary 1.116]{micu_pseudovaluations_2020}}]\label{prop:16750}
    Let $v\in V(K[T])$ be a pseudo-valuation.
    \begin{enumerate}
        \item $v$ is of Type I if and only if $v$ is not a valuation.
        \item The Gauss valuation is of Type II.
        \item Inductive valuations consist of Type I, Type II (except for $v_\frakG$) and Type III points. More precisely, let
              $$\rho_v^{\mathsf{h}}=[v_\frakG,v_1(\phi_1)=\lambda_1,\cdots,v_k(\phi_k)=\lambda_k]$$
              be the homogeneous representation of $v$. Then
              \begin{enumerate}
                  \item $v$ is of Type I if and only if $\lambda_k=\infty$;
                  \item $v$ is of Type II if and only if $\lambda_k\in\bbQ$;
                  \item $v$ is of Type III if and only if $\lambda_k\in\bbR\backslash\bbQ$.
              \end{enumerate}
    \end{enumerate}
\end{proposition}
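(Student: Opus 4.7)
The plan is to treat the three parts in order of increasing complexity. Part (1) is an ideal-theoretic observation: $v$ fails to be a valuation iff $\ker v \neq 0$, and since $K[T]$ is a PID, a nonzero prime ideal is generated by an irreducible polynomial, so $\calH(v) = \mathrm{Frac}(K[T]/\ker v)$ is a finite extension of $K$ embeddable into $\bbC_p$, giving Type I. Conversely, if $\ker v = 0$, then $\calH(v) \supseteq K(T)$ cannot embed into $\bbC_p$. Part (2) is the direct computation $\Gamma_{v_\frakG} = \Gamma_K$ and $\kappa_{v_\frakG} = \kappa_K(\bar T)$, yielding $\frake_{v_\frakG} = 0$ and $\frakf_{v_\frakG} = 1$, i.e.\ Type II.

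For part (3), I would induct on the length $k$ of the homogeneous representation $\rho_v^{\mathsf{h}}$, with the base case $k = 0$ handled by part (2). For the inductive step $v_k = [v_{k-1}, v_k(\phi_k) = \lambda_k]$, if $\lambda_k = \infty$ then $\phi_k \in \ker v_k$, so $v_k$ is not a valuation and part (1) yields Type I. If $\lambda_k < \infty$, I would invoke MacLane's formulas for the effect of augmentation on the value group and the residue field: $\Gamma_{v_k} = \Gamma_{v_{k-1}} + \bbZ \lambda_k$, and $\kappa_{v_k}$ is obtained from $\kappa_{v_{k-1}}$ by imposing the relation $\phi_k^{\tau_k} \equiv c_k$ for a suitable $c_k \in K[T]$ of $v_{k-1}$-value $\tau_k \lambda_k$, where $\tau_k$ is the smallest positive integer with $\tau_k \lambda_k \in \Gamma_{v_{k-1}}$ (and $\tau_k = \infty$ if no such integer exists). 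This identification simultaneously kills the transcendental direction carried by $\kappa_{v_{k-1}}$ and, when $\tau_k < \infty$, adjoins a new transcendental in its place.

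The case analysis then completes the proof. If $\lambda_k \in \bbQ$, the homogeneity and uniqueness of $\rho_v^{\mathsf{h}}$ force $\Gamma_{v_{k-1}} \otimes \bbQ = \bbQ$ (otherwise $\phi_{k-1}^m$ with $m > 0$ would have to divide $\phi_k$ for the homogeneity expression to balance irrational contributions, contradicting the irreducibility of $\phi_k$, while the $m = 0$ option contradicts the uniqueness via \Cref{lem:52781}); hence $\tau_k < \infty$, $\Gamma_{v_k} \otimes \bbQ = \bbQ$ (so $\frake = 0$), and $\frakf = 1$, yielding Type II. If $\lambda_k \notin \bbQ$, then $\tau_k = \infty$: no new transcendental is adjoined while the old one is killed, giving $\frakf = 0$, and $\Gamma_{v_k}$ gains an irrational contribution modulo $\Gamma_K$, giving $\frake = 1$, yielding Type III.

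The main obstacle is the careful bookkeeping of the residue field under augmentation---specifically, verifying that precisely one transcendental direction persists at each rational augmentation while being permanently killed at an irrational one; this requires MacLane's explicit residue-ring construction together with the uniqueness and structural constraints of the homogeneous representation. A secondary point is to confirm that Type IV points, which correspond to transfinite limits of augmentation chains in the Berkovich disc, cannot arise from any finite homogeneous representation, thereby justifying their exclusion from the inductive valuations.
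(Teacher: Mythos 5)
Your proposal attempts a direct proof, whereas the paper offers none: \Cref{prop:16750} is simply imported from Micu's thesis (Corollary 1.116). Judged on its own terms, the proposal has two genuine gaps. First, in part (1) the converse direction rests on the claim that if $\ker v=0$ then $\calH(v)\supseteq K(T)$ ``cannot embed into $\bbC_p$''. This is false: $\bbC_p$ has infinite transcendence degree over $K$, and for any $a\in\calO_{\bbC_p}$ transcendental over $K$ the map $f\mapsto v_p(f(a))$ is a pseudo-valuation with trivial kernel whose field $\calH\cong K(a)$ embeds isometrically into $\bbC_p$. So a genuine valuation can perfectly well have $\calH(v)\subseteq\bbC_p$; statement (1) is only correct when ``Type I'' is taken in the sense underlying Micu's classification (equivalently, $\calH(x)$ algebraic over $K$, i.e.\ nontrivial kernel), and any honest proof must separate the algebraic from the transcendental evaluation points of the Berkovich disc. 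Your argument does not confront this and instead proves a false intermediate assertion.

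Second, in part (3) you only establish one inclusion: given an inductive valuation with homogeneous representation, the last key value $\lambda_k$ determines its type. But the clause ``inductive valuations consist of Type I, Type II (except $v_\frakG$) and Type III points'', in the form the paper actually uses it (``$v^{m-1}$ has an inductive representation since it is of Type I''), also asserts the converse: every pseudo-valuation with nontrivial kernel over the complete field $K$ (and every Type II/III point) admits a \emph{finite} inductive representation. That is the substantive half --- MacLane's finiteness theorem for complete discretely valued ground fields, i.e.\ the termination of the Ore--MacLane approximation process, which genuinely uses completeness (over non-complete fields limit valuations are unavoidable) --- and it cannot be obtained by your induction on the length of a representation that is assumed to exist; your closing remark about Type IV points only excludes Type IV from the inductive ones, not the other way around. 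A smaller issue in the same part: to get $\frake=0$ in the rational case you need $\Gamma_{v_{k-1}}\subseteq\bbQ$, i.e.\ that all earlier key values $\lambda_1,\dots,\lambda_{k-1}$ are rational; the standard route is that an incommensurable augmentation admits no key polynomial inequivalent to its own $\phi_i$, so an irrational value can occur only at the last step --- your parenthetical ``balancing'' argument via homogeneity and \Cref{lem:52781} is not a proof of this fact.
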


\section{Recurrence polynomials via pseudo-valuations}

\begin{lemma}\label{lem:23861}
    For any element $A$ in $\calO_{\bbK_p^{m,n}}$ and rational number $r\geq 0$, there exists a polynomial $R$ over $\bbZ$ such that $v_p\left(A-R\left(\pi_p^{m,n}\right))\right)>r$.
\end{lemma}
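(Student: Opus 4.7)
The plan is to reduce the statement to the density of $\bbZ$ in $\bbZ_p$, by first identifying $\calO_{\bbK_p^{m,n}}$ with $\bbZ_p[\pi_p^{m,n}]$.

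First, I would verify that $\bbK_p^{m,n}/\bbQ_p$ is totally ramified. Both $\bbQ_p(\zeta_{p^m})/\bbQ_p$ and $\bbQ_p(p^{1/p^n})/\bbQ_p$ are totally ramified with residue field $\bbF_p$: the former via the cyclotomic Eisenstein polynomial, the latter via $T^{p^n}-p$. In the compositum $\bbK_p^{m,n}$, the images of $\zeta_{p^m}$ and $p^{1/p^n}$ modulo the maximal ideal lie in $\bbF_p$ (being $1$ and $0$ respectively), so the residue field of $\bbK_p^{m,n}$ is $\bbF_p$; hence $\bbK_p^{m,n}/\bbQ_p$ is totally ramified. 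Since $\pi_p^{m,n}$ is a uniformizer (available in the induction step at which this lemma will be invoked), the standard structure theorem for totally ramified extensions of local fields (Serre, \emph{Local Fields}, Ch.~I, \S 6) gives that $\{1, \pi_p^{m,n}, \ldots, (\pi_p^{m,n})^{e-1}\}$ is a $\bbZ_p$-basis of $\calO_{\bbK_p^{m,n}}$, where $e \coloneqq [\bbK_p^{m,n}:\bbQ_p]$. In particular, $\calO_{\bbK_p^{m,n}} = \bbZ_p[\pi_p^{m,n}]$.

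With this in hand, I would expand $A = \sum_{i=0}^{e-1} a_i (\pi_p^{m,n})^i$ uniquely with $a_i \in \bbZ_p$, and for each $i$ choose $b_i \in \bbZ$ satisfying $v_p(a_i - b_i) > r$, using that $\bbZ$ is $p$-adically dense in $\bbZ_p$. Setting $R(T) \coloneqq \sum_{i=0}^{e-1} b_i T^i \in \bbZ[T]$, the ultrametric inequality together with $v_p(\pi_p^{m,n}) \geq 0$ yields
\[
v_p\bigl(A - R(\pi_p^{m,n})\bigr) \;\geq\; \min_{0 \leq i \leq e-1}\bigl(v_p(a_i - b_i) + i\cdot v_p(\pi_p^{m,n})\bigr) \;>\; r,
\]
as required.

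I do not anticipate any serious obstacle: the only nontrivial input is the identification $\calO_{\bbK_p^{m,n}} = \bbZ_p[\pi_p^{m,n}]$, which reduces to the total ramification of $\bbK_p^{m,n}/\bbQ_p$ together with $\pi_p^{m,n}$ being a uniformizer, and the remainder of the argument is a routine coefficient-wise $p$-adic approximation.
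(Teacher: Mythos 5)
Your main line of argument is correct and lands very close to the paper's proof, which likewise rests entirely on the total ramification of $\bbK_p^{m,n}/\bbQ_p$; the only difference is the decomposition used. The paper writes $A=\sum_{k\geq 0}a_k\left(\pi_p^{m,n}\right)^k$ with digits $a_k\in\{0,1,\cdots,p-1\}$ (the $\pi$-adic expansion, available because the residue field is $\bbF_p$) and takes $R$ to be the truncation $\sum_{k\leq\lceil r/e_{m,n}\rceil}a_kT^k$, so no coefficient approximation is needed; you instead invoke $\calO_{\bbK_p^{m,n}}=\bbZ_p[\pi_p^{m,n}]$ with the finite basis $1,\pi_p^{m,n},\ldots,\left(\pi_p^{m,n}\right)^{e-1}$ and approximate each $\bbZ_p$-coefficient by a rational integer using density of $\bbZ$ in $\bbZ_p$. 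Both conclude with the same one-line ultrametric estimate; your $R$ has degree $<e$ with possibly large integer coefficients, the paper's has degree roughly $r/e_{m,n}$ with digit coefficients, and either form suffices for the application.

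One caveat: your justification of total ramification does not hold up as stated. The residue field of $\bbK_p^{m,n}$ is the residue ring of all of $\calO_{\bbK_p^{m,n}}$, and in general it is not generated by the residues of a chosen pair of field generators: for instance $\bbQ_p\left(\sqrt{p},\sqrt{up}\right)$, with $u$ a non-square unit, is generated by two elements whose residues lie in $\bbF_p$ (both are $0$), yet it contains the unramified quadratic extension $\bbQ_p\left(\sqrt{u}\right)$. Total ramification of $\bbK_p^{m,n}/\bbQ_p$ is true, and the paper simply assumes it (it is part of the setup, consistent with $e_{m,n}=\frac{1}{p^{m+n-1}(p-1)}$ being the valuation of a uniformizer, as established in the authors' earlier work on these extensions), so you should either cite that fact or prove it correctly — e.g. by showing that $\bbK_p^{m,n}/\bbQ_p(\zeta_{p^m})$ is totally ramified of degree $p^n$ and using multiplicativity of ramification indices — rather than argue via the residues of $\zeta_{p^m}$ and $p^{1/p^n}$. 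With that repair (or citation), your proof is complete.
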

\begin{proof}
    Since $\bbK_p^{m,n}$ is totally ramified over $\bbQ_p$, $A\in\calO_{\bbK_p^{m,n}}$ can be written as
    $$A=\sum_{k=0}^\infty a_k \left(\pi_p^{m,n}\right)^k,\ a_k\in\{0,1,\cdots,p-1\}.$$
    Set $R(T)=\sum_{k=0}^{\lceil r/e_{m,n}\rceil}a_k T^k$. Then
    $$v_p\left(A-R\left(\pi_p^{m,n}\right)\right)=v_p\left(\sum_{k=\lceil r/e_{m,n}\rceil+1}^\infty a_k\left(\pi_p^{m,n}\right)^k\right)>r.$$
\end{proof}

In rest of this paragraph, we prove \Cref{thm:40345} by induction using the pseudo-valuations. By \cite{WangYuan2021}, the uniformizer $\pi_p^{m-1,n}$ of $\bbK_p^{m-1,n}$ is constructed for $m=3$. We suppose that the uniformizer $\pi_p^{m-1,n}$ of $\bbK_p^{m-1,n}$ is constructed for some $m\geq 3$.

Fix an integer $n\geq 1$. Let $G_m(T)\coloneqq T^p-\zeta_{p^{m-1}}$ and $e_{m,n}\coloneqq \frac{1}{p^{m+n-1}(p-1)}$ be the $p$-adic valuation of any uniformizer of $\bbK_p^{m,n}$. We have an isomorphism of $p$-adic fields:
$$\bbK_p^{m-1,n}[T]/(G_m)\to\bbK_p^{m,n}, T\mapsto \zeta_{p^m}.$$
Thus, the $p$-adic valuation on $\bbK_p^{m,n}$ corresponds to a pseudo-valuation $v^{m-1}\in V\left(\bbK_p^{m-1,n}[T]\right)$ with kernel $(G_m)$. Note $v_p\left(\zeta_{p^m}-1\right)=p^n\cdot e_{m,n}$. Thus, by B\'{e}zout lemma, to construct an element of $\bbK_p^{m,n}$ with $p$-adic valuation $e_{m,n}$, it is enough to find a polynomial over $\bbK_p^{m-1,n}$ which is mapped to $d\cdot e_{m,n}$  by $v^{m-1}$, with some $d>0$ and $\gcd(d,p)=1$.

By \Cref{prop:16750}, $v^{m-1}$ has an inductive representation since it is of Type I. The following proposition collects some properties of the homogeneous representation of $v^{m-1}$:
\begin{proposition}
    Let
    $$\rho_v^{\mathsf{h}}=[v_\frakG,v_1(\phi_1)=\lambda_1,\cdots,v_k(G_m)=\infty]$$
    be the homogeneous representation of $v^{m-1}$.
    \begin{enumerate}[ref=(\arabic*)]
        \item \label{it:14854}$\phi_1=T-1$;
        \item \label{it:60497}for every $i=1,\cdots k$, $\deg\phi_i\in\{1,p\}$.
    \end{enumerate}
    Moreover, let $s\geq 1$ be the index of the last key polynomial in $\rho_v^{\mathsf{h}}$ with degree $1$, then we have
    \begin{enumerate}[resume,ref=(\arabic*)]
        \item \label{it:48926}$\rho_v^\prime=[v_\frakG,w_1(\phi_s)=\lambda_s,w_2(G_m)=\infty]$ is also an inductive representation of $v^{m-1}$;
        \item \label{it:2375} $\lambda_s=d\cdot e_{m,n}$ for a positive integer $d$ satisfying $\gcd\left(d,p\right)=1$.
    \end{enumerate}
\end{proposition}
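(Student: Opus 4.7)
My plan is to establish the four assertions in order, combining MacLane's structural theory with the residual behaviour of $G_m$. For \Cref{it:14854}, any key polynomial $\phi_1$ over the Gauss valuation is monic with reduction $\bar{\phi}_1 \in \bbF_p[T]$ irreducible. Since $\zeta_{p^{m-1}} \equiv 1 \pmod{\pi_K}$, we have $\bar{G}_m = T^p - 1 = (T-1)^p$; for the augmentation $v_1 > v_\frakG$ to move towards $v^{m-1}$, $\bar{\phi}_1$ must divide $\bar{G}_m$, forcing $\bar{\phi}_1 = T - 1$. The homogeneity condition then compels the constant coefficient of $\phi_1$ to be a Teichm\"uller lift carrying no $\pi_K$-factor (by value matching), whence $\phi_1 = T - [1] = T - 1$.

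For \Cref{it:60497}, MacLane's divisibility relation $\deg\phi_i \mid \deg\phi_{i+1}$ iterates to $\deg\phi_i \mid \deg\phi_k = p$, and primality of $p$ forces $\deg\phi_i \in \{1, p\}$. Combined with monotonicity $\deg\phi_{i+1} \geq \deg\phi_i$, the degree sequence is $1,\ldots,1,p,\ldots,p$ with a single transition at position $s+1$. For \Cref{it:48926}, I would iterate \Cref{lem:52781} through sub-representations: for any $i<k$ with $\deg\phi_i = \deg\phi_{i+1}$, truncate the representation at position $i+1$ (a valid representation of $v_{i+1}$), invoke \Cref{lem:52781} to drop $\phi_i$, then reattach the tail $[v_{i+2}(\phi_{i+2})=\lambda_{i+2},\ldots,v_k(\phi_k)=\lambda_k]$. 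Iteratively eliminating the intermediate key polynomials in both degree blocks leaves the two-step representation $[v_\frakG, w_1(\phi_s)=\lambda_s, w_2(G_m)=\infty]$, where $w_1$ and $w_2 = v^{m-1}$ agree with the original $v_s$ and $v_k$.

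For \Cref{it:2375} I plan a value-group calculation in this two-step representation. The $p$-adic valuation on $\bbK_p^{m,n}$ has value group $e_{m,n}\bbZ$, so $\Gamma_{v^{m-1}} = e_{m,n}\bbZ$, while $\Gamma_K = e_{m-1,n}\bbZ = pe_{m,n}\bbZ$. From the augmentation formula $\Gamma_{w_1} = \Gamma_K + \lambda_s\bbZ = pe_{m,n}\bbZ + \lambda_s\bbZ$. For $w_2 = [w_1, w_2(G_m)=\infty]$, the infinite key value forces $w_2(f) = w_1(f \bmod G_m)$ for every $f \notin (G_m)$; since $\deg\phi_s = 1$, every polynomial of degree $<p$ has a $\phi_s$-adic expansion of length $\leq p$ and the resulting $w_1$-values group-generate $\Gamma_K + \lambda_s\bbZ$, so $\Gamma_{w_2} = \Gamma_{w_1}$. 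Equating $\Gamma_{w_2} = \Gamma_{v^{m-1}} = e_{m,n}\bbZ$ yields $pe_{m,n}\bbZ + \lambda_s\bbZ = e_{m,n}\bbZ$, so the image of $\lambda_s$ in $e_{m,n}\bbZ/pe_{m,n}\bbZ \cong \bbZ/p\bbZ$ is nonzero. Writing $\lambda_s = d\cdot e_{m,n}$ then gives $\gcd(d,p) = 1$; positivity of $d$ is immediate from $\lambda_s \geq \lambda_1 = v_p(\zeta_{p^m}-1) = p^n e_{m,n} > 0$.

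The main obstacle will be the bookkeeping in \Cref{it:48926}, since \Cref{lem:52781} is stated only for the terminal pair of key polynomials: making the sub-representation argument precise---verifying that the augmentation conditions $\deg\phi_{i+2} \geq \deg\phi_{i+1}$ and $\phi_{i+2} \nsim_{v_{i+1}} \phi_{i+1}$ persist after dropping an intermediate key, and that the resulting chain still terminates at $v^{m-1}$---requires a careful induction. A secondary subtlety in \Cref{it:14854} is simultaneously enforcing the key-polynomial property over $v_\frakG$ and the homogeneity constraint on the constant coefficient.
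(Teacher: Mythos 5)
Your proposal is correct and, in structure, mirrors the paper's proof: item (1) via the residual identity $\bar{G}_m=(T-1)^p$ (the paper phrases this as $G_m\sim_{v_\frakG}(T-1)^p$ and cites MacLane to pin down the homogeneous key $\phi_1=T-1$), item (3) by iterating \Cref{lem:52781}, and item (4) by comparing value groups, $e_{m,n}\bbZ=\Gamma_{v^{m-1}}=\Gamma_K+\lambda_s\bbZ=p\,e_{m,n}\bbZ+\lambda_s\bbZ$. The one genuine divergence is item (2): you invoke the divisibility $\deg\phi_i\mid\deg\phi_{i+1}$ for successive key polynomials and push it down the chain to $\deg\phi_k=\deg G_m=p$, whereas the paper argues directly from the $\phi_i$-adic expansion of $G_m$ via MacLane's Newton-polygon theorems (Theorems 5.2 and 5.3 of his second paper), obtaining $(\max\calN_i-\min\calN_i)\deg\phi_i=p$ and hence $\deg\phi_i\mid p$ for each $i$. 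Both facts are standard in MacLane's theory; your route is shorter but leans on the degree-divisibility theorem for key polynomials, while the paper's stays closer to the specific polynomial $G_m$. Your truncate-and-reattach bookkeeping for item (3) is exactly the careful version of what the paper dispatches with ``applying \Cref{lem:52781} repeatedly,'' and your observation that the reattachment conditions only involve the unchanged data $v_{i+1},\phi_{i+1},\phi_{i+2}$ is the right justification. One small inaccuracy: in item (4) you assert $\lambda_1=v_p(\zeta_{p^m}-1)$, which need not hold (later degree-one augmentations can raise the value of $\phi_1$ above $\lambda_1$, so one only has $\lambda_1\le v_p(\zeta_{p^m}-1)$); this does not matter, since positivity of $\lambda_s$ already follows from the augmentation inequality $\lambda_s>v_\frakG(\phi_s)=0$ for the monic integral degree-one key $\phi_s$.
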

\begin{proof}
    \begin{enumerate}
        \item One calculates that
              \begin{align*}
                    & v_\frakG\left(G_m(T)-(T-1)^p\right)
                  =  v_\frakG\left(\sum_{t=1}^{p-1}(-1)^{t+1}\binom{p}{t}T^t+(1-\zeta_{p^{m-1}})\right)                                                 \\
                  = & \min\left(v_p\left((-1)^p\binom{p}{p-1}\right),\cdots,v_p\left((-1)^2\binom{p}{1}\right),v_p\left(1-\zeta_{p^{m-1}}\right)\right) \\
                  > & 0=v_\frakG(G_m(T)),
              \end{align*}
              which means that $G_m(T)$ is $v_\frakG$-equivalent to $(T-1)^p$. Since $T-1$ is a homogeneous key polynomial over $v_\frakG$, by \cite[Section 9]{MacLane1936a} one knows that $\phi_1=T-1$.
        \item  For $i=1,\cdots,k-1$,
              let $$G_m(T)=g_{m_i}^{(i)}\cdot\phi_i^{m_i}+\cdots+g_{1}^{(i)}\phi_i+g_0^{(i)}$$
              be the $\phi_i$-adic expansion of $G_m$ and let
              $$\calN_i\coloneqq \left\{j\in \{0,\cdots,m_i\}: v_i\left(g_j^{(i)}\phi_i^j\right)=v_i(G_m)\right\}.$$
              By \cite[Theorem 5.2]{MacLane1936b} and \cite[Theorem 5.3]{MacLane1936b}, one has
              $$\left(\max\calN_i-\min\calN_i\right)\cdot \deg\phi_i =\deg G_m=p.$$
              Then $\deg\phi_k\mid p$.
        \item Such $s$ exists by the assertions \ref{it:14854} and \ref{it:60497}. We have $\deg\phi_1=\cdots=\deg\phi_s=1$ and $\deg\phi_{s+1}=\cdots=\deg\phi_k$. The result follows from applying \Cref{lem:52781} repeatedly.
        \item Since $v^{m-1}$ corresponds to the $p$-adic valuation on $\bbK_p^{m,n}$, we know that $\Gamma_v=\Gamma_{\bbK_p^{m,n}}=e_{m,n}\bbZ$. By the assertion $\ref{it:48926}$, $\Gamma_v$ is generated by $\Gamma_{v_\frakG}=\Gamma_{\bbK_p^{m-1,n}}=e_{m-1,n}\bbZ$ and $\lambda_s$. The result follows.
    \end{enumerate}
\end{proof}

Let $s\geq 1$ be the index of the last key polynomial in $\rho_v^{\mathsf{h}}$ with degree $1$ as in the previous proposition.
Since $\phi_s$ is a homogeneous key polynomial over $v_{s-1}$ of degree $1$, it is monic  with coefficients in $\calO_{\bbK_p^{m-1,n}}$, i.e. $\phi_s=T-A_m$, $A_m\in \calO_{\bbK_p^{m-1,n}}$. Thus, by the assertion \ref{it:2375} of the previous proposition, there exists a positive integer $d$ with $\gcd(d,p)=1$ such that
$$v_p\left(\zeta_{p^m}-A_m\right)=\lambda_s=d\cdot e_{m,n}.$$
By \Cref{lem:23861}, there exists a polynomial $\calR_p^{m,n}(T)\in\bbZ_{(p)}[T]$ such that $v_p\left(A_m-\calR_p^{m,n}(\pi_p^{m-1,n})\right)>d\cdot e_{m,n}$, thus
$v_p\left(\zeta_{p^m}-\calR_p^{m,n}(\pi_p^{m-1,n})\right)=d\cdot e_{m,n}$.
By B\'{e}zout lemma, there exist two integers  $\alpha_p^{m,n},\beta_p^{m,n}$ that
\begin{align*}
    e_{m,n}= & \alpha_p^{m,n}\cdot p^n\cdot e_{m,n}+\beta_p^{m,n}\cdot d\cdot e_{m,n}                                                                    \\
    =        & \alpha_p^{m,n}\cdot v_p\left(\zeta_{p^m}-1\right)+\beta_p^{m,n}\cdot v_p\left(\zeta_{p^m}-\calR_p^{m,n}\left(\pi_p^{m-1,n}\right)\right).
\end{align*}
Thus,
$$\pi_p^{m,n}\coloneqq \left(\zeta_{p^m}-1\right)^{\alpha_p^{m,n}}\cdot\left(\zeta_{p^m}-\calR_p^{m,n}\left(\pi_p^{m-1,n}\right)\right)^{\beta_p^{m,n}}$$
is a uniformizer of $\bbK_p^{m,n}$.

\begin{remark}
    We actually proved that $\calR_p^{m,n}(T)$ can be chosen in $\bbZ[T]$. Compared to the statement of \Cref{thm:40345} that $\calR_p^{m,n}(T)\in\bbZ_{(p)}[T]$, the result is essentially equivalent in the sense of \Cref{lem:23861}. We keep $\bbZ_{(p)}[T]$ in the statement to be consistent with \Cref{thm:bestuniformizer}.
\end{remark}


\begin{thebibliography}{10}
    \providecommand{\url}[1]{{\tt #1}}
    \providecommand{\urlprefix}{URL: }
    \expandafter\ifx\csname urlstyle\endcsname\relax
      \providecommand{\doi}[1]{doi:\discretionary{}{}{}#1}\else
      \providecommand{\doi}{doi:\discretionary{}{}{}\begingroup
      \urlstyle{rm}\Url}\fi
    \providecommand{\eprint}[2][]{\url{#2}}
    
    \bibitem{baker_potential_2010}
    M.~Baker and R.~Rumely,
    \newblock {\em Potential Theory and Dynamics on the Berkovich Projective
      Line\/}, Vol. 159 of {\em Mathematical Surveys and Monographs\/},  (American
      Mathematical Society, 2010).
    \newblock DOI:\href{https://doi.org/10.1090/surv/159}{10.1090/surv/159}.
    
    \bibitem{BergerMOF}
    L.~Berger,
    \newblock Comment on MathOverflow,
    \newblock (\href{https://mathoverflow.net}{MathOverflow}, 2016).
    \newblock
      \urlprefix\url{https://mathoverflow.net/questions/230612/uniformizer-for-splitting-field-of-p1-pn-over-p-adics#comment570046_230612}.
    
    \bibitem{MR1484478}
    W.~Bosma, J.~Cannon, and C.~Playoust,
    \newblock The {M}agma algebra system. {I}. {T}he user language.
    \newblock {\em J. Symbolic Comput.\/}, {\bfseries 24}(3-4) (1997), pp. 235-265.
    \newblock DOI:\href{http://doi.org/10.1006/jsco.1996.0125}{10.1006/jsco.1996.0125}.
    
    \bibitem{FonWin1979b}
    J.-M. Fontaine and J.-P. Wintenberger,
    \newblock Extensions algébriques et corps des normes des extensions APF des
      corps locaux,
    \newblock {\em C. R. Acad. Sci., Paris, Sér. A\/}, {\bfseries 288} (1979), pp.
      441-444.
    \newblock ISSN 0366-6034; 0302-8429.
    
    \bibitem{FonWin1979a}
    J.-M. Fontaine and J.-P. Wintenberger,
    \newblock Le ”corps des normes” de certaines extensions algébriques de
      corps locaux,
    \newblock {\em C. R. Acad. Sci., Paris, Sér. A\/}, {\bfseries 288} (1979), pp.
      367-370.
    \newblock ISSN 0366-6034; 0302-8429.
    
    \bibitem{MacLane1936a}
    S.~MacLane,
    \newblock A Construction for Absolute Values in Polynomial Rings,
    \newblock {\em Trans. Amer. Math. Soc.\/},
    {\bfseries 40}(3) (1936), pp. 363-395.
    \newblock DOI:\href{http://doi.org/10/cpc7bd}{10/cpc7bd}.
    
    \bibitem{MacLane1936b}
    S.~MacLane,
    \newblock A Construction for Prime Ideals as Absolute Values of an Algebraic
      Field,
    \newblock {\em Duke Math. J.\/}, {\bfseries 2}(3) (1936), pp. 492-510.
    \newblock DOI:\href{http://doi.org/10/c8zb77}{10/c8zb77}.
    
    \bibitem{micu_pseudovaluations_2020}
    T.~Micu,
    \newblock {\em Pseudovaluations on polynomial rings, diskoids and normal models
      of the projective line\/},
    \newblock Dissertation, Universität Ulm, (2020).
    \newblock DOI:\href{http://doi.org/10.18725/OPARU-32859}{10.18725/OPARU-32859}.
    
    \bibitem{Poonen1993}
    B.~Poonen,
    \newblock Maximally complete fields,
    \newblock {\em Enseign. Math\/}, {\bfseries 39}(1-2) (1993), pp. 87-106.
    \newblock DOI:\href{http://doi.org/10.5169/SEALS-60414}{10.5169/SEALS-60414}.
    
    \bibitem{RuthCode}
    J.~Rüth,
    \newblock henselization, (2022).
    \newblock \urlprefix\url{https://github.com/MCLF/henselization}.
    
    \bibitem{RuthDiscussion}
    J.~Rüth and Y.~Yuan,
    \newblock Discussion with Julian Rüth in GitHub Issues, (2022).
    \newblock \urlprefix\url{https://github.com/MCLF/henselization/issues/44}.
    
    \bibitem{ducros_introduction_2015}
    M.~Temkin,
    \newblock Introduction to Berkovich Analytic Spaces,
    \newblock in {\em Berkovich Spaces and Applications\/}, Vol.~2119 (Springer, 2015),
    \newblock pp. 3-66.
    \newblock DOI:\href{http://doi.org/10.1007/978-3-319-11029-5_1}{10.1007/978-3-319-11029-5\_1}.
    
    \bibitem{sagemath}
    {The Sage Developers},
    \newblock {\em {S}ageMath, the {S}age {M}athematics {S}oftware {S}ystem
      ({V}ersion 9.7)\/}, (2022).
    \newblock \urlprefix\url{https://www.sagemath.org}.
    
    \bibitem{WangYuan2022}
    S.~Wang and Y.~Yuan,
    \newblock Truncated expansion of {$\zeta_{p^n}$} in the {$p$}-adic
      Mal'cev-Neumann field,
    \newblock preprint on {\em arXiv}, (2023).
    \newblock DOI: \href{https://doi.org/10.48550/arXiv.2111.07127}{arXiv.2111.07127}.
    
    \bibitem{WangYuan2021}
    S.~Wang and Y.~Yuan,
    \newblock Uniformizer of the false Tate curve extension of $\mathbb{Q}_p$,
    \newblock {\em Ramanujan J\/}, {\bfseries 58} (2022), pp. 549-595.
    \newblock DOI:\href{https://doi.org/10/gmkfxj}{10/gmkfxj}.
    
    \end{thebibliography}
\end{document}